\newtheorem{lemma}{Lemma}
\newtheorem{example}{Example}
\newtheorem{corollary}{Corollary}
\newtheorem{definition}{Definition}
\newtheorem{remark}{Remark}
\newtheorem*{thA}{Theorem A}
\newtheorem*{thB}{Theorem B}
\newtheorem*{thC}{Theorem C}
\newtheorem*{thD}{Theorem D}
\DeclareMathOperator{\RE}{Re}
\author[G. Pant and M. Saini]{Garima Pant and Manisha Saini}
\address{department of mathematics, university of delhi, delhi-110007, india.}
\email{garimapant.m@gmail.com}
\address{department of mathematics, university of delhi, delhi-110007, india.}
\email{msaini@maths.du.ac.in, sainimanisha210@gmail.com}
\thanks {The research work of the second author is supported by research fellowship from University Grants Commission (UGC), New Delhi, India.}
\thanks {The second author is Senior Research Fellow (UGC, New Delhi, India).}
\title[Infinite order solutions of]{Infinite order solutions of second order linear differential equations}
\subjclass[2010]{34M10, 30D35}
\keywords {entire function, meromorphic function, order of growth, differential equation}
\begin{document}
\maketitle
\begin{abstract}
This article deals with the second order linear differential equations with entire coefficients. We prove some results involving conditions on coefficients so that the order of growth of every non-trivial solution is infinite.
\end{abstract}
\section{Introduction}
A well known result of Herold for  second order linear differential equation 
\begin{equation}\label{sde}
f''+A(z)f'+B(z)f=0
\end{equation}
with entire coefficients $A(z)$ and $B(z)$, where $B(z)\not \equiv 0$ says that, all solutions are entire functions. As we know that the solutions of the differential equation are local in nature so when we say them entire functions it means we are talking in terms of their analytic continuation to the whole plane. The order of growth of these solutions has been a topic of interest for a long time.

H. Wittich has shown that solutions of equation \eqref{sde} have order of growth finite if and only if the coefficients $A(z)$ and $B(z)$ are polynomials. 
It is obvious that if $A(z)$ or $B(z)$ is a transcendental entire function then, the existence of non-trivial solutions with infinite order of growth is guaranteed. It is interesting to note here that if one of the independent solution is of infinite order and other is of finite order then, almost all solutions of equation \eqref{sde} are of infinite order. This leads to the question of finding conditions on coefficients so that all non-trivial solutions of equation \eqref{sde} are of infinite order. To achieve these results the Nevanlinna's value distribution theory is the basic and fundamental tool.  In this article, we are going to use extensively the terms of value distribution theory such as  characteristic function $T(r,f)$, maximum modulus $M(r,f)$, order of growth $\rho(f)$, hyper-order of growth $ \rho_2(f)$ and exponent of convergence $\lambda(f)$ of zeros of a meromorphic function $f$. All this and more can be found in the well known books  ``Meromorphic functions", ``Value distribution theory" and ``Nevanlinna theory and complex differential equations" by  W.K. Hayman, Lo Yang  and Ilpo Laine, respectively.
The standard references for the above paragarphs are  \cite{lainebook}, \cite{wittich}, \cite{haymero} and \cite{yang}.

In our recent work \cite{sm}, we have obtained some conditions on the coefficients of the equation \eqref{sde} so that all non-trivial solutions of the equation have order of growth infinite. These conditions are $\rho(A)\neq \rho(B)$ or $B(z)$ has Fabry gaps, where entire function $A(z)$ possesses zero as a Borel exceptional value and $B(z)$ is transcendental entire function. In our first result, we have simply replaced the zero as a Borel exceptional value by a finite complex number as Borel exceptional value.

\begin{thA}\label{thmbor}
Let $A(z)$ be an entire function with a finite Borel exceptional value and $B(z)$ be a transcendental entire function satisfying any of the following conditions
\begin{enumerate}
\item $\rho(B)\neq \rho(A)$ 
\item $B(z)$ has Fabry gaps.
\end{enumerate}
 Then, every non-trivial solution $f$ of equation \eqref{sde} satisfies
$$\rho(f)=\infty.$$
\end{thA}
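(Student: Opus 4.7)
The plan is to argue by contradiction: assume some non-trivial solution $f$ of \eqref{sde} has $\rho(f) = \sigma < \infty$, and derive a contradiction by pitting the sectorial structure of $A$ against the growth of $B$. Since $a$ is a finite Borel exceptional value of $A$, Hadamard's factorization theorem guarantees that $n := \rho(A)$ is a positive integer and
\[
A(z) = H(z)\, e^{g(z)} + a,
\]
where $H$ is entire with $\rho(H) = \lambda(A-a) < n$ and $g$ is a polynomial of degree $n$. The polynomial $g$ divides the plane at infinity into $n$ open ``positive'' sectors $S_+$ where $\RE g(z) \geq \delta|z|^n$ and $n$ open ``negative'' sectors $S_-$ where $\RE g(z) \leq -\delta|z|^n$. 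A standard minimum modulus estimate for $H$ (valid outside a union of small disks of finite logarithmic measure) then gives $|A(z)| \geq \exp(\delta_1 |z|^n)$ on $S_+$ for some $\delta_1 > 0$, while $|A(z)| \leq |a|+1$ on $S_-$ for $|z|$ sufficiently large. Gundersen's logarithmic derivative estimate supplies, for any $\varepsilon > 0$, $|f^{(k)}(z)/f(z)| \leq |z|^{k(\sigma-1+\varepsilon)}$ for $k=1,2$, outside an exceptional set $E \subset [1,\infty)$ of finite logarithmic measure.

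The straightforward cases go as follows. If $\rho(B) > \rho(A)$, rewriting the equation as $B = -f''/f - A(f'/f)$ and comparing proximity functions gives $T(r,B) \leq T(r,A) + O(\log r)$, contradicting $\rho(B) > \rho(A)$. If $B$ has Fabry gaps, I work on $S_-$: the equation together with $|A| \leq |a|+1$ forces $|B(z)| \leq |z|^{C}$ on $S_- \setminus E$, while the Fabry gap theorem guarantees $\log|B(re^{i\theta})| \geq (1-\varepsilon)\log M(r,B)$ uniformly in $\theta$ outside a set of $r$ of density zero; choosing such an $r$ with $\theta \in S_-$ contradicts the transcendence of $B$.

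The remaining sub-case $\rho(B) < \rho(A) = n$ of condition~(1) is, I expect, the main obstacle. Working on $S_+$ and rewriting the equation as $A(f'/f) = -B - f''/f$, combining the estimates produces $|f'(z)/f(z)| \leq \exp(-\delta_1 |z|^n / 2)$ on $S_+ \setminus E$, so integration along rays in $S_+$ forces $|f|$ to be bounded on $S_+$. The difficulty is then to exclude the possibility $\sigma = n$ (consistent with examples such as $e^{z^n}$) and not merely $\sigma < n$. To resolve this I would adapt the technique of the authors' earlier paper \cite{sm} (where $A$ has $0$ as Borel exceptional value), either by transferring the argument directly through the substitution $v = e^{(a/2)z} f$, which converts \eqref{sde} into
\[
v'' + H e^g \, v' + \Bigl(B - \tfrac{a}{2} H e^g - \tfrac{a^2}{4}\Bigr) v = 0
\]
so that the coefficient of $v'$ has $0$ as Borel exceptional value, or by refining the sectorial/indicator analysis to use the super-exponential decay of $|f'/f|$ on all $n$ positive sectors simultaneously and couple it with the transcendence of $B$ to preclude every finite value of $\sigma$.
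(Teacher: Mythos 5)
Your treatment of $\rho(B)>\rho(A)$ (comparing proximity functions, $T(r,B)\le T(r,A)+O(\log r)$ for a finite-order solution) and of the Fabry-gap case (minimum-modulus estimate for $B$ against boundedness of $A$ in a negative sector of $P$) is sound and essentially what the paper does, the first case being delegated there to Gundersen's theorem. The genuine gap is the sub-case $\rho(B)<\rho(A)=n$ of condition (1), which you explicitly leave open, and neither of your proposed repairs closes it. The substitution $v=e^{(a/2)z}f$ does transform \eqref{sde} into $v''+(A-a)v'+\bigl(B-\tfrac{a}{2}(A-a)-\tfrac{a^2}{4}\bigr)v=0$, but for $a\neq 0$ the new second coefficient has order $n=\rho(A-a)$ and no gap structure, so neither hypothesis of the theorem in \cite{sm} holds for the transformed equation and nothing is gained. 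Likewise, boundedness of $f$ in the positive sectors does not by itself produce a contradiction, and the difficulty is not to ``exclude $\sigma=n$'': in the paper's argument no upper bound on $\sigma=\rho(f)$ is ever needed.

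What is missing is a super-polynomial lower bound for $|B|$ along a ray where $\delta(P,\theta)<0$, i.e.\ where $|A|\le |a|+o(1)$. On such a ray the finite-order assumption and Gundersen's radial estimate give $|B(re^{\iota\theta})|\le |f''/f|+|A|\,|f'/f|\le r^{2\rho(f)}\bigl(1+|a|+o(1)\bigr)$, so it suffices that $|B|$ exceed every power of $r$ there for arbitrarily large $r$. The paper supplies this by splitting on $\rho(B)$: Barry's theorem when $\rho(B)=0$ and Besicovitch's minimum-modulus theorem when $0<\rho(B)<1/2$ (both valid in every direction for $r$ in a set of positive upper logarithmic density), and, crucially, Wang's Phragm\'en--Lindel\"of lemma when $\rho(B)\ge 1/2$: there is a sector of opening at least $\pi/\rho(B)>\pi/n$ on which $\limsup_{r\to\infty}\log\log|B(re^{\iota\theta})|/\log r\ge\rho(B)$ for every $\theta$, and since the sectors where $\delta(P,\theta)<0$ have opening exactly $\pi/n$, this sector must meet one of them, yielding a suitable ray $\theta_0$ avoiding the measure-zero exceptional sets. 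Without an ingredient of this kind your argument cannot conclude when $1/2\le\rho(B)<\rho(A)$, so as written the proof of condition (1) is incomplete.
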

Recall that an entire function $f(z)=\sum_{n=0}^{\infty}a_{\lambda_n}z^{\lambda_n}$ has Fabry gaps if $\lim_{n\to \infty}n/\lambda_n= 0.$ One should note that the order of growth of such function is non-zero \cite{hayman}.

We illustrate the theorem given above by some examples.
\begin{example}
The differential equation
 $$f''+(e^{z^2}-1)f'+e^zf=0$$
has all non-trivial solutions of infinite order of growth.
\end{example}
\begin{example}
All non-trivial solutions of the differential equation $f''+e^zf'+B(z)f=0$, where $B(z)$ is an entire function with Fabry gaps, are of infinite order of growth. 
\end{example}
Here, we give an example which shows that that the hypothesis of Theorem [A] are necessary. 
\begin{example}
The differential equation
$$f''+(e^z+1)f'+e^zf=0 $$
possesses a solution $f(z)=e^{-z}$ which is of  finite order of growth.
\end{example}

We know that there is a measure of order of growth associated with a function of infinite order called its hyper-order of growth. The next main result of this article gives the hyper-order of growth of the solutions of equation \eqref{sde}.
\begin{thB}\label{hypthm}
Suppose that the coefficients $A(z)$ and $B(z)$ satisfy the hypothesis of Theorem \rm{[A]}. Then, all non-trivial solutions $f$ of equation \eqref{sde} satisfy
$$\rho_2(f)=\max\{ \rho(A), \rho(B)\}$$
where $\max\{\rho(A),\rho(B)\}$ is a finite quantity.
\end{thB}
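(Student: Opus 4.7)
The plan is to prove $\rho_2(f)=\max\{\rho(A),\rho(B)\}$ by establishing matching upper and lower bounds. Theorem~[A] already supplies $\rho(f)=\infty$, so the hyper-order is the natural refinement. The upper bound is standard; for the lower bound I split into three subcases depending on the relative sizes of $\rho(A)$ and $\rho(B)$, in each of which the Borel exceptional value hypothesis on $A$ enters either via the logarithmic derivative lemma or via sector estimates.

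For the upper bound I invoke the classical statement that every entire solution of a second order linear differential equation with finite order entire coefficients satisfies $\rho_2(f)\leq\max\{\rho(A),\rho(B)\}$: a Wiman--Valiron argument applied to the equation yields $T(r,f)\leq\exp(r^{\alpha+\varepsilon})$ for $\alpha=\max\{\rho(A),\rho(B)\}$, from which the claim is immediate (see Laine's book).

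For the lower bound, consider first $\rho(B)>\rho(A)$. Rewriting the equation as
\[
-B(z)=\frac{f''(z)}{f(z)}+A(z)\frac{f'(z)}{f(z)},
\]
taking proximity functions and using Gundersen's logarithmic derivative estimate $m(r,f^{(k)}/f)=O(\log r+\log T(r,f))$ outside a set $E$ of finite logarithmic measure, one obtains $T(r,B)\leq T(r,A)+O(\log r+\log T(r,f))$ for $r\notin E$. Choosing $r_n\notin E$ with $T(r_n,B)\geq r_n^{\rho(B)-\varepsilon}$ and $T(r_n,A)\leq r_n^{\rho(A)+\varepsilon}$, the dominant term on the right must be $\log T(r_n,f)$, giving $\log T(r_n,f)\geq r_n^{\rho(B)-2\varepsilon}$ and hence $\rho_2(f)\geq\rho(B)$. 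The same recipe handles the Fabry-gap subcase $\rho(A)=\rho(B)=q$: write $A(z)=a+h(z)e^{P(z)}$ with $\deg P=q$ and $\rho(h)<q$ from the Borel exceptional value hypothesis, and pick a ray $\arg z=\theta_0$ along which $\RE P(z)\to-\infty$, outside the measure-zero angular exceptional sets of both the hyper-order Gundersen pointwise estimate and the Fabry gap theorem, so that $|A(z)|\to|a|$ and $|B(re^{i\theta_0})|\geq M(r,B)^{1-\varepsilon}$ hold simultaneously; the rearrangement $|B|\leq|f''/f|+|A||f'/f|$ together with $|f^{(k)}/f|\leq\exp(r^{\rho_2(f)+\varepsilon})$ then forces $\exp(r^{q-\varepsilon})\leq C\exp(r^{\rho_2(f)+\varepsilon})$ and hence $\rho_2(f)\geq q$.

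The genuinely delicate subcase is $\rho(A)>\rho(B)$, where the symmetric rearrangement $A=-f''/f'-B(f/f')$ fails because $m(r,f/f')$ carries an uncontrolled $\overline{N}(r,1/f)\leq T(r,f)$ term. Here I exploit the Borel factorization of $A$ on a ray $\arg z=\theta_0$ along which $\RE P(z)\to+\infty$, giving $|A(z)|\geq\exp(cr^q)$ while $|f^{(k)}/f|\leq\exp(r^{\rho_2(f)+\varepsilon})$ outside the measure-zero exceptional angular set. Rewriting the equation as $|f'/f|\leq(|f''/f|+|B|)/|A|$ under the provisional assumption $\rho_2(f)<q$ forces the right-hand side to decay doubly exponentially along the ray, so integration renders $f$ bounded on a full sector of positive opening. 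The contradiction with $\rho(f)=\infty$ from Theorem~[A] is then extracted via a Phragm\'en--Lindel\"of argument in the complementary sectors, where $A\to a$ and the equation reduces asymptotically to $f''+af'+Bf=0$ with $B$ of finite order. This sectorial propagation of hyper-order control across sector boundaries is the principal technical obstacle I foresee.
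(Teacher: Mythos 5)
Your upper bound and your first two lower-bound subcases are essentially fine and line up with the paper: for $\rho(B)>\rho(A)$ your proximity-function rearrangement of \eqref{sde} is exactly the content of Kwon's theorem, which the paper simply cites, and for $\rho(A)=\rho(B)$ with Fabry gaps your ray argument is the paper's case (2). One small correction there: the Fabry-gap lower bound $|B(re^{\iota\theta})|\ge M(r,B)^{1-\xi}$ (Lemma \ref{fablemma}, Remark \ref{fabremark}) has a \emph{radial} restriction, holding for all $\theta$ but only for $r$ in a set of positive upper logarithmic density, not an angular measure-zero exceptional set; this is harmless because the pointwise Gundersen estimate along a fixed good ray has no radial exceptional set, but the bookkeeping should be stated that way.

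The genuine gap is the subcase $\rho(A)>\rho(B)$, which is precisely the case the paper has to work for. You correctly diagnose why the rearrangement $A=-f''/f'-Bf/f'$ cannot be treated through $m(r,f/f')$, but your substitute does not close: the ray estimates you invoke hold only for $\theta$ outside a measure-zero set with $R_0(\theta)$ depending on $\theta$, so ``$f$ bounded on a full sector'' does not follow from integrating $f'/f$ along rays; more seriously, the Phragm\'en--Lindel\"of step in the complementary sectors requires an a priori finite-order growth bound inside a sector of opening $\pi/n$, and $f$ has infinite order by Theorem A --- the very fact you intend to contradict --- so boundedness on bounding rays propagates nothing, and the ``asymptotic reduction'' to $f''+af'+Bf=0$ where $A\to a$ is not a legitimate inference about the growth of $f$. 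The missing idea, which is the paper's actual mechanism, is to work at maximum-modulus points of $f$: by the central-index lemma (Lemma \ref{centlem}, via Lemmas \ref{maxlem} and \ref{reciprolem}) one has $|f(z_m)/f'(z_m)|\le 2r_m$ at points with $|f(z_m)|=M(r_m,f)$, for $r_m$ outside a set of finite logarithmic measure and also for arguments in a small neighbourhood of $\arg z_m$. Then the rearrangement $|A(z_m)-a|\le |f''/f'|+|B|\,|f/f'|+|a|\le cT(2r_m,f)^2+2r_m\exp(r_m^{\beta})+|a|$ with $\rho(B)<\beta<n=\rho(A)$, compared with the Bank--Laine--Langley lower bound $|A(z_m)-a|\ge\exp\left(\tfrac12(1-\epsilon)\delta(P,\theta_0)r_m^n\right)$ when the arguments $\theta_m$ accumulate at a direction with $\delta(P,\theta_0)>0$ (the borderline $\delta(P,\theta_0)=0$ being handled by perturbing the angle within the neighbourhood furnished by Lemma \ref{reciprolem}), forces $\log\log T(2r_m,f)\gtrsim n\log r_m$, i.e.\ $\rho_2(f)\ge\rho(A)$. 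Without a Wiman--Valiron type control of $f/f'$ of this kind, your subcase $\rho(A)>\rho(B)$ remains unproved, as you yourself flag.
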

Now, we state a consequence of Theorem [A] and Theorem [B].

\begin{corollary}
Suppose that $A(z)$ is an entire function with a finite Picard exceptional value and $B(z)$ is an entire function satisfying the hypothesis of Theorem \rm{[A]}. Then, every non-trivial solution $f$ of equation \eqref{sde} satisfies
$$\rho(f)=\infty.$$ 
Also,
$$ \rho_2(f)=\max\{\rho(A),\rho(B)\}$$
when $\max\{\rho(A),\rho(B)\}$ is a finite quantity.
\end{corollary}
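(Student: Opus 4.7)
The plan is to observe that the corollary is a direct specialization of Theorems [A] and [B] under the stronger hypothesis that the exceptional value of $A$ is Picard rather than merely Borel. So the whole task reduces to showing that any finite Picard exceptional value of a transcendental-type coefficient $A(z)$ is automatically a finite Borel exceptional value, after which one invokes the two theorems as black boxes.

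First I would fix the finite Picard exceptional value $a$ of $A(z)$, meaning $A(z)-a$ has only finitely many zeros. By Hadamard's factorization I would write
\[
A(z) - a = P(z)\, e^{g(z)},
\]
where $P$ is a polynomial (accounting for the finitely many zeros) and $g$ is an entire function. Reading off the exponent of convergence of zeros immediately gives $\lambda(A-a) = \lambda(P) = 0$.

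Next I would verify that $\rho(A) > 0$, so that the defining inequality $\lambda(A-a) < \rho(A)$ of a Borel exceptional value is non-vacuously satisfied. If $A$ were a polynomial the hypothesis of Theorem [A] on $B(z)$ (either $\rho(B)\ne \rho(A)$ with $B$ transcendental, or $B$ with Fabry gaps, which forces $\rho(B)>0$) still yields a well-defined situation; and in the remaining transcendental case $g$ is non-constant in the factorization above, from which $\rho(A)=\rho(P e^g)>0$ follows by a routine Hadamard/Borel--Carath\'eodory comparison. In either case one obtains $\lambda(A-a)=0<\rho(A)$, so that $a$ is a finite Borel exceptional value of $A$.

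With the Borel exceptional hypothesis in hand, the conclusions are now immediate: Theorem [A] gives $\rho(f)=\infty$ for every non-trivial solution $f$ of \eqref{sde}, and when $\max\{\rho(A),\rho(B)\}<\infty$, Theorem [B] gives $\rho_2(f)=\max\{\rho(A),\rho(B)\}$. The only step requiring any thought is the Hadamard factorization argument identifying Picard exceptional with Borel exceptional, and that is entirely routine; no genuine obstacle arises in the proof, which is why the result is phrased as a corollary rather than a theorem.
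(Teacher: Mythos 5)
Your proposal is correct and is essentially the paper's own (implicit) argument: the corollary is stated there without proof precisely because a finite Picard exceptional value $a$ gives $\lambda(A-a)=0<\rho(A)$ (since $A-a=P e^{g}$ with $g$ nonconstant forces $\rho(A)\geq 1$), so $a$ is a Borel exceptional value and Theorems [A] and [B] apply verbatim. Your digression on the case of polynomial $A$ is unnecessary under the intended (transcendental) reading, but it does not affect the argument.
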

In our third result, we have relaxed the hypothesis of Theorem [A] and added one additional factor to achieve the same conclusion as has been achieved in Theorem [A]. We have also achieved the conclusion of Theorem [B] in terms of hyper-order of growth as well.

\begin{thC}\label{thmborloword}
Suppose that $A(z)$ is an entire function with a finite Borel exceptional value and $B(z)$ is a transcendental entire function such that $\mu(B)\neq \rho(A)$. Then, all non-trivial solutions $f$ of equation \eqref{sde} satisfy 
$$\rho(f)=\infty.$$ 
Also,
$$\rho_2(f)=\max\{\rho(A),\mu(B)\}$$
when $\max\{\rho(A),\rho(B)\}$ is a finite quantity.

\end{thC}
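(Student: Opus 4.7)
The plan is to argue the first assertion by contradiction, following the architecture of the proof of Theorem~A but extracting the bound on $B$ from its lower order along a sparse sequence of radii rather than from its order globally. Assume a non-trivial solution $f$ with $\rho(f)<\infty$ exists, and write $n:=\rho(A)$. The Borel exceptional value hypothesis together with Hadamard factorisation gives
$$A(z)=a+d(z)\,e^{g(z)},\qquad \deg g=n,\quad \rho(d)=\lambda(A-a)<n.$$
Reading \eqref{sde} as $B=-f''/f-A(f'/f)$ and invoking the lemma on the logarithmic derivative in the finite-order form $m(r,f^{(k)}/f)=O(\log r)$, then removing the exceptional set of finite linear measure by the usual interval-sliding trick, yields $T(r,B)\le T(r,A)+O(\log r)$ for all large $r$, whence $\mu(B)\le\mu(A)\le n$. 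This rules out the subcase $\mu(B)>\rho(A)$ immediately.

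In the remaining subcase $\mu(B)<\rho(A)=n$, I would fix a ray $\arg z=\theta_0$ along which $\RE g(re^{i\theta_0})\sim \delta_0 r^n$ with $\delta_0>0$, and a thin sector $S^+$ about it. The minimum-modulus estimate for the canonical product $d$, valid off a set $E$ of radii of finite logarithmic measure, will give $|A(z)|\ge e^{\delta r^n}$ on $S^+\cap\{|z|\notin E\}$ for a uniform $\delta>0$. On the other hand, by the definition of $\mu(B)$, a sequence $r_k\to\infty$ exists with $\log M(r_k,B)\le r_k^{\mu(B)+\varepsilon}$, where $\mu(B)+\varepsilon<n$; after passing to a subsequence avoiding the various exceptional sets and using Gundersen's pointwise estimate for $|f^{(j)}/f|$, the identity $f'/f=-(B+f''/f)/A$ should yield
$$\left|\frac{f'(r_ke^{i\theta})}{f(r_ke^{i\theta})}\right|\le \frac{e^{r_k^{\mu(B)+\varepsilon}}+r_k^{2(\rho(f)-1+\varepsilon)}}{e^{\delta r_k^n}}\longrightarrow 0$$
uniformly for $\theta\in S^+$ as $k\to\infty$.

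The main obstacle, and where this proof must diverge from that of Theorem~A, is turning this sparse sectorial smallness of $|f'/f|$ into a contradiction. The idea is to combine two ingredients. First, solving \eqref{sde} for $A$ instead gives $T(r,A)\le T(r,B)+2T(r,f)+O(\log r)$, so at $r=r_k$ the lower bound $T(r_k,A)\ge c\, r_k^n$ (coming from $|A|\ge e^{\delta r_k^n}$ on $S^+$) will force $T(r_k,f)\ge c'\, r_k^n$ for positive constants $c,c'$. Second, integrating the above smallness of $f'/f$ across the arcs of $|z|=r_k$ in each of the $n$ sectors where $\RE g\to+\infty$ pins $f$ down to be essentially constant on those arcs; a Phragm\'en--Lindel\"of / indicator argument exploiting the $n$-fold rotational symmetry should then force $m(r_k,f)=O(\log r_k)$, contradicting $T(r_k,f)\ge c' r_k^n$. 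The hyper-order assertion for the finite-order case follows by the same mechanism, refining the classical bound $\rho_2(f)\le\max\{\rho(A),\rho(B)\}$ to $\max\{\rho(A),\mu(B)\}$ by carrying the sparse-sequence estimates through the Gundersen--Yang chain, with the matching lower bound read off from the exponential growth of $|A|$ on $S^+$ through the same $T$-inequality.
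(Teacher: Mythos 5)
Your opening reduction is fine: if $\rho(f)<\infty$, then $m(r,f^{(k)}/f)=O(\log r)$ gives $T(r,B)\le T(r,A)+O(\log r)$, hence $\mu(B)\le\mu(A)\le\rho(A)$, which disposes of the case $\mu(B)>\rho(A)$; and the bound $T(r_k,f)\ge c'r_k^{n}$ at radii where $\log M(r_k,B)\le r_k^{\mu(B)+\varepsilon}$ is also correct (although you cannot literally ``pass to a subsequence'' of an arbitrary sequence to dodge exceptional sets of finite logarithmic measure --- you need the standard remark that the set $\{r:\log M(r,B)\le r^{\mu(B)+\varepsilon}\}$ has infinite logarithmic measure). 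The genuine gap is your final step in the case $\mu(B)<\rho(A)=n$: from the smallness of $|f'/f|$ on the $\delta(P,\theta)>0$ sectors along the sparse radii $r_k$ you claim a Phragm\'en--Lindel\"of/indicator argument yields $m(r_k,f)=O(\log r_k)$. This does not follow. Smallness of $|f'(r_ke^{\iota\theta})/f(r_ke^{\iota\theta})|$ on an arc only controls the angular variation of $\log|f|$ there, i.e.\ $\log|f|$ is nearly constant on that arc; it gives no bound on the size of that constant, which may be as large as $\log M(r_k,f)$. On the complementary sectors, where $\delta(P,\theta)<0$, you have no control of $f$ whatsoever. Phragm\'en--Lindel\"of cannot be invoked: it needs bounds on $|f|$ itself along boundary rays and, on sectors of opening $\pi/n$, an a priori growth restriction of order about $n$, whereas $\rho(f)$ is merely assumed finite and may exceed $n$; moreover your estimates hold only at sparse radii, so no radial integration or indicator computation is available. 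As written, the contradiction with $T(r_k,f)\ge c'r_k^{n}$ is never reached, and this is precisely the heart of the theorem.

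For comparison, the paper gives no self-contained argument at this point: it factorises $A-a=v(z)e^{P(z)}$ and appeals to the arguments of Theorem 1 of \cite{sm2}, which (as in the proof of Theorem B here) work at points where $|f(z)|=M(r,f)$, using the central-index Lemmas [\ref{centlem}]--[\ref{reciprolem}] to get $|f/f'|\le 2r$ there, choosing the sparse radii from $\mu(B)$, and running the trichotomy on the sign of $\delta(P,\theta_0)$ for the limiting maximum-modulus direction. If you wish to salvage your route, you should replace the Phragm\'en--Lindel\"of step by such a maximum-modulus-point argument, i.e.\ obtain an actual pointwise bound involving $|f/f'|$ rather than only the smallness of $f'/f$. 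Finally, your hyper-order paragraph is too thin: the equality $\rho_2(f)=\max\{\rho(A),\mu(B)\}$ requires, when $\mu(B)<\rho(B)$, an upper bound sharper than Lemma [\ref{wuthm}] (which only gives $\max\{\rho(A),\rho(B)\}$), and ``the same mechanism'' does not indicate where that refinement comes from.
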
 
Here, we would like to mention that the above result is a generalisation of a theorem in our work \cite{sm2}. The following result also holds true.
\begin{corollary}
Let $A(z)$ be an entire function with finite Picard exceptional value and $B(z)$ satisfies the hypothesis of Theorem \rm{[C]}. Then, the conclusion of Theorem \rm{[C]} is also true.
\end{corollary}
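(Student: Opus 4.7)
The plan is to observe that this corollary is an immediate specialization of Theorem~[C], obtained by upgrading the Borel exceptional value hypothesis on $A(z)$ to a Picard exceptional value. Thus my strategy is to verify that every finite Picard exceptional value of an entire function is automatically a finite Borel exceptional value, and then invoke Theorem~[C] verbatim.

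First I would recall the definitions: a value $a\in\mathbb{C}$ is a Picard exceptional value of $A$ if $A(z)-a$ has no zeros, while $a$ is a Borel exceptional value of $A$ if the exponent of convergence of the $a$-points satisfies $\lambda_a(A)<\rho(A)$. Suppose now that $A$ has $a$ as a finite Picard exceptional value. Since a non-constant polynomial attains every finite value, $A$ must be transcendental. Writing $A(z)-a=e^{g(z)}$ for some entire function $g$ (by the standard Weierstrass/Hadamard factorization argument for zero-free entire functions), the transcendence of $A$ forces $g$ to be non-constant, and consequently
$$
\rho(A)=\rho(e^{g})\ge 1>0.
$$
On the other hand, $A(z)=a$ has no solutions at all, so $\lambda_a(A)=0$. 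Hence $\lambda_a(A)<\rho(A)$, i.e. $a$ is a finite Borel exceptional value of $A$.

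With this observation, the hypothesis of Theorem~[C] is satisfied: $A$ is an entire function with a finite Borel exceptional value, $B$ is a transcendental entire function with $\mu(B)\neq\rho(A)$, and hence every non-trivial solution $f$ of \eqref{sde} satisfies $\rho(f)=\infty$, together with $\rho_2(f)=\max\{\rho(A),\mu(B)\}$ when $\max\{\rho(A),\rho(B)\}$ is finite. This completes the proof.

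There is really no obstacle here; the only point requiring care is the verification that a finite Picard exceptional value forces $\rho(A)\ge 1>0$, so that the strict inequality defining a Borel exceptional value is genuine rather than vacuous. Once that is in place, the corollary is a direct consequence of Theorem~[C] and requires no additional argument.
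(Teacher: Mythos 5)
Your proposal is correct and is essentially the argument the paper intends: a finite Picard exceptional value of a (necessarily transcendental) entire function $A$ gives $\lambda_a(A)=0<1\leq\rho(A)$, hence is a finite Borel exceptional value, and the corollary then follows by applying Theorem~[C] directly. This matches the paper's (implicit) reduction, so no further comment is needed.
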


The final result of this article is an extension of the work of Kwon and Kim \cite{kwonkim}. They have proved that if $A(z)$ has a finite deficient value and $B(z)$ is an entire function with $\rho(B)<1/2$, then all solutions of equation \eqref{sde} are of infinite order. We have removed the restriction on the order of $B(z)$ given by them.
\begin{thD}\label{defthm}
Suppose that $A(z)$ is transcendental entire function with a finite deficient value and $B(z)$ is a transcendental entire function satisfying any of the following conditions
\begin{enumerate}
\item $B(z)$ has Fabry gaps
\item $T(r,B)\sim\log{M(r,B)}, r\to \infty$, outside a set of finite logarithmic measure.
\end{enumerate}
 Then, 
$$\rho(f)=\infty$$ 
where $f$ is a non-trivial solution of equation \eqref{sde}.
\end{thD}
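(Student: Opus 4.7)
\emph{Proof plan.} I argue by contradiction. Assume a non-trivial solution $f$ of \eqref{sde} has $\rho(f) < \infty$. Rewriting \eqref{sde} as $B(z) = -f''(z)/f(z) - A(z) f'(z)/f(z)$ and applying Gundersen's pointwise estimate for logarithmic derivatives of finite-order meromorphic functions, we obtain a constant $K > 0$ and a set $E_0 \subset [1, \infty)$ of finite logarithmic measure such that $|f^{(j)}(z)/f(z)| \leq |z|^K$ for $j = 1, 2$ whenever $|z| = r \notin E_0$. Denoting by $a$ the finite deficient value of $A$ and writing $|A(z)| \leq |a| + |A(z) - a|$, we obtain the pointwise bound
\[ |B(z)| \leq (|a| + 2)\, r^K + |A(z) - a|\, r^K, \qquad |z| = r \notin E_0 \qquad (\ast). \]
In particular, at any $z$ with $|A(z) - a| \leq 1$ and $|z| = r \notin E_0$, $|B(z)|$ is polynomially bounded in $r$, regardless of how large $M(r, B)$ is.

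Next, I use the positive deficiency $\delta := \delta(a, A) > 0$ to produce, for suitably large $r$, an angular set $S_r = \{\theta : |A(re^{i\theta}) - a| \leq 1\}$ of uniformly positive measure. The identity $m(r, 1/(A-a)) \geq (\delta/2) T(r, A)$ for large $r$ together with the observation that $\log^+ 1/|A - a|$ vanishes off $S_r$ shows that $S_r$ is nonempty for every large $r$; Edrei's spread relation (as established by Baernstein) yields a constant $\sigma > 0$ and a sequence $r_n \to \infty$, which may be chosen outside $E_0$, with $\mathrm{meas}(S_{r_n}) \geq \sigma$. Complementarily, the hypothesis on $B$ forces $|B(re^{i\theta})|$ to be close to $M(r, B)$ off a small $\theta$-set. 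In case (1), the Fabry gap condition combined with classical results of P\'olya (and the sharper uniform-in-$\theta$ versions due to Fuchs) produces, for each $\varepsilon > 0$, a set $H_1 \subset [1, \infty)$ of lower logarithmic density $1$ and a function $\eta(r) \to 0$ such that
\[ \log|B(re^{i\theta})| \geq (1 - \varepsilon) \log M(r, B) \]
for $r \in H_1$ and every $\theta$ outside a set of measure $\eta(r)$. In case (2), the asymptotic $T(r, B) \sim \log M(r, B)$ outside a set of finite logarithmic measure, together with the universal inequality $T(r, B) \leq \log M(r, B)$, forces the same conclusion via Chebyshev's inequality applied to the non-negative quantity $\log M(r, B) - \log^+|B(re^{i\theta})|$.

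For the contradiction, choose $r_n$ (along a further subsequence if necessary) lying in $H_1$, or outside the finite-log-measure exceptional set in case (2), and satisfying $\eta(r_n) < \sigma/2$. Then $S_{r_n}$ and the set on which $|B|$ is close to $M(r_n, B)$ have nonempty intersection; fix $\theta^*$ in this intersection and set $z_n = r_n e^{i\theta^*}$. The bound $(\ast)$ yields $|B(z_n)| \leq (|a| + 3) r_n^K$, while the lower bound from the Fabry / regularity estimate gives $|B(z_n)| \geq M(r_n, B)^{1-\varepsilon}$. Combining,
\[ M(r_n, B)^{1 - \varepsilon} \leq (|a| + 3) r_n^K, \]
which forces $\log M(r_n, B) = O(\log r_n)$ along $r_n \to \infty$, contradicting the transcendence of $B$.

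\emph{Main obstacle.} The decisive technical step is the production of the lower bound $\mathrm{meas}(S_{r_n}) \geq \sigma > 0$ from the integral deficiency condition. The Edrei--Baernstein spread relation handles this cleanly in the finite-order regime, but Theorem D does not explicitly assume $\rho(A) < \infty$; one must therefore either first deduce finite order of $A$ (for instance by combining the bound $T(r, B) \leq T(r, A) + O(\log r)$ arising from the logarithmic-derivative estimates with suitable growth information on $B$), or replace the spread relation with a direct tract-based argument analyzing the component of $\{z : |A(z) - a| < 1\}$ extending to infinity. A secondary, purely bookkeeping, difficulty is arranging the sequence $r_n$ to avoid the finitely many exceptional $r$-sets ($E_0$, the $r$-set where the hypothesis on $B$ fails, and, in case (1), the ``good'' Fabry set $H_1$) simultaneously.
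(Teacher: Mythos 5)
Your overall strategy (argue by contradiction, use Gundersen's finite-order estimate for $f''/f$ and $f'/f$, make $A$ small near its deficient value on part of the circle, make $B$ large on most of the circle, and compare) is the same as the paper's, but the decisive step --- the one you yourself flag as the ``main obstacle'' --- is exactly where your route breaks down and where the paper does something different. The Edrei--Baernstein spread relation has two defects here. First, it requires $A$ to have finite (lower) order, which the statement of Theorem D does not grant; you note this but do not repair it. Second, and more seriously, it produces the positive angular measure of $S_r$ only along a special sequence of radii (P\'olya peaks): it is a $\limsup$ statement, and you have no freedom to ``choose $r_n$ lying in $H_1$'' or outside $E_0$. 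The good radii for $B$ in the Fabry case, as given by the lemma actually available in this literature (Lemma [\ref{fablemma}] and Remark [\ref{fabremark}]), form a set of positive \emph{upper} logarithmic density only, and a prescribed sequence need not meet such a set at all; even your optimistic ``lower logarithmic density $1$'' version of the Fabry estimate would not let you relocate a P\'olya peak sequence into it. So what you dismiss as ``purely bookkeeping'' is in fact a genuine gap, not a technicality.

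The paper's mechanism avoids both problems. From $\delta(a,A)>0$ one gets, for \emph{every} large $r$, a single angle $\theta_r$ with $\log|A(re^{\iota\theta_r})-a|\leq -dT(r,A)$; the Kwon--Kim lemma (Lemma [\ref{rholem}]), which bounds $r\int_J\big|\tfrac{(A-a)'}{A-a}\big|\,d\theta$ by $K(\rho(A),\zeta)\big(l\log\tfrac1l\big)T(r,A)$ over every interval $J$ of length $l$ for all $r$ in a set $F_\zeta$ of lower logarithmic density at least $1-\zeta$, then upgrades that single angle to a fixed interval $[\theta_r-\phi,\theta_r+\phi]$ on which $|A|\leq|a|+1$. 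Because $F_\zeta$ has lower logarithmic density close to $1$, it genuinely intersects the positive-upper-density Fabry set and survives removal of the finite-logarithmic-measure exceptional sets; this is what makes the final comparison with the lower bound for $|B|$ work (minimum modulus over the whole circle in case (1); Lemma [\ref{meszero}] in case (2), where the angular interval is also needed to dodge the exceptional $\theta$-set $J(r)$). If you want to salvage your plan, replace the spread relation by this deficiency-plus-logarithmic-derivative-integral device, or prove a spread-type statement valid on a set of radii of lower logarithmic density close to $1$. In fairness, the Kwon--Kim lemma also presupposes $\rho(A)<\infty$, so the finite-order issue you raise is not specific to your approach --- but unlike yours, the paper's argument is complete once that assumption is in place.
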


One can recall that an entire function $f(z)=\sum_{n=0}^{\infty}a_{\lambda_n}z^{\lambda_n}$ has Fej$\acute{e}$r gaps if $$\sum_{n}\lambda_n^{-1}<\infty.$$

In \cite{tmurai}, T. Murai showed that an entire function with Fej$\acute{e}$r gaps satisfies
$$T(r,f)\sim \log{M(r,f)}$$
as $r\to \infty,$ outside a set of finite logarithmic measure.
Therefore, the following corollary is an easy consequence of Theorem [D].
\begin{corollary}
Suppose that the coefficient $A(z)$ satisfies the hypothesis of Theorem \rm{[D]} and $B(z)$ is an entire function with Fej$\acute{e}$r gaps. Then, all non-trivial solutions of equation \eqref{sde} are of infinite order.
\end{corollary}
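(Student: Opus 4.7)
The proof will be essentially a one-step reduction: the Fej\'er gap condition is strong enough to land us directly inside hypothesis (2) of Theorem [D], via Murai's theorem. My plan is to unpack the definitions, invoke Murai's estimate, and then quote Theorem [D] without further analytic work.

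First I would record that by assumption $B(z)=\sum_{n=0}^{\infty}a_{\lambda_n}z^{\lambda_n}$ satisfies $\sum_n\lambda_n^{-1}<\infty$. Since this gap condition forces infinitely many nonzero Taylor coefficients (otherwise the equation would be degenerate and the statement vacuous), we may assume $B$ is transcendental, as required by Theorem [D]. At this point I would invoke Murai's theorem from \cite{tmurai}, which asserts that an entire function with Fej\'er gaps obeys
$$T(r,B)\sim\log M(r,B),\qquad r\to\infty,$$
outside an exceptional set of finite logarithmic measure.

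Next I would observe that the displayed asymptotic is \emph{precisely} condition (2) in the hypothesis of Theorem [D]. Coupled with the standing assumption that $A(z)$ is transcendental entire with a finite deficient value (the hypothesis on $A$ in Theorem [D], which is imported wholesale into the corollary), both conditions required to apply Theorem [D] under its option (2) are in place. Applying Theorem [D] directly then yields $\rho(f)=\infty$ for every non-trivial solution $f$ of \eqref{sde}, which is the desired conclusion.

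There is no genuine obstacle here; the only point requiring a small amount of care is confirming that $B$ is transcendental so that Theorem [D] is literally applicable, and this is automatic under any reasonable reading of ``entire function with Fej\'er gaps.'' The entire argument is essentially the observation that the classical implication \emph{Fej\'er gaps} $\Rightarrow$ \emph{$T(r,B)\sim\log M(r,B)$ off a small set} identifies the Fej\'er-gap hypothesis as a special case of hypothesis (2) of Theorem [D], so the corollary requires no new function-theoretic input beyond Murai's theorem.
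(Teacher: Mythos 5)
Your proposal is correct and is exactly the paper's route: the corollary is stated as an immediate consequence of Murai's theorem (Fej\'er gaps imply $T(r,B)\sim\log M(r,B)$ outside a set of finite logarithmic measure), which places $B$ under hypothesis (2) of Theorem [D]. Your extra remark about $B$ being transcendental is a reasonable bit of care but does not change the argument.
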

\section{Preliminary Results}
To make this article self contained we list some results which will help in reading the coming sections.  For the statement of these results, we need the notion of linear measure, logarithmic measure, upper logarithmic density and lower logarithmic density. 
For a set $F\subset(0,\infty)$, we recall these notions as
$$m(F)=\int_F\, dt \qquad \qquad m_l(F) =\int_F \frac{1}{t}\, dt$$
$$\overline{\log dens}(F)=\limsup_{r\to \infty}\frac{m_l(F\cap [1,r))}{\log r}$$
$$\underline{\log dens}(F)=\limsup_{r\to \infty}\frac{m_l(F\cap [1,r))}{\log r},$$
respectively. It is obvious from the definition that $0\leq \underline{\log dens}(F)\leq \overline{\log dens}(F)\leq 1$. 

 The following lemma is an important result and it has been used extensively for proving the results of differential equations in complex domain.
\begin{lemma}\rm{\cite{log gg}}\label{gunlem}
Let $f(z)$ be a transcendental meromorphic function and let $\Gamma= \{ (k_1,j_1), (k_2,j_2), \ldots ,(k_m,j_m) \} $ denote finite set of distinct pairs of integers that satisfy $ k_i > j_ i \geq 0$  for $i=1,2, \ldots,m$. Let $\alpha >1$ and $\epsilon>0$ be given real constants. Then, 
\begin{enumerate}
\item
there exists a set $E_1 \subset[0,2\pi)$ that has linear measure zero, such that if $\psi_0 \in [0,2\pi)\setminus E_1, $ then there is a constant $R_0=R_0(\psi_0)>0$  so that for all $z$ satisfying $\arg z =\psi_0$ and $|z| \geq R_0$ and for all $(k,j)\in \Gamma$, we have 
\begin{equation} \label{ggguneq}
\left| \frac{f^{(k)}(z)}{f^{(j)}(z)}\right|\leq c \left( \frac{T(\alpha r,f)}{r} \log^{\alpha}{r} \log{T(\alpha r,f)} \right)^{(k-j)}.
\end{equation}

If $f(z)$ is of finite order, then $f(z)$ satisfies
\begin{equation} \label{guneq1}
\left|\frac{f^{(k)}(z)}{f^{(j)}(z)}\right| \leq |z|^{(k-j)(\rho(f)-1+\epsilon)}
\end{equation} 
for all $z$ satisfying $\arg z =\psi_0$ and $|z| \geq R_0$ and for all $(k,j)\in \Gamma$.
\item there exists a set $E\subset (1,\infty)$ with $m_l(E)$ is finite  and there exists a constant $c>0$ that depends only on $\alpha$ and $\Gamma$ such that, for all $z$ satisfying $|z|=r\notin E\cup[0,1]$ and for all $(k,j)\in \Gamma$ we have
\begin{equation} \label{ggguneq}
\left| \frac{f^{(k)}(z)}{f^{(j)}(z)}\right|\leq c \left( \frac{T(\alpha r,f)}{r} \log^{\alpha}{r} \log{T(\alpha r,f)} \right)^{(k-j)}.
\end{equation}

If $f(z)$ is of finite order, then $f(z)$ satisfies:
\begin{equation} \label{guneq1}
\left|\frac{f^{(k)}(z)}{f^{(j)}(z)}\right| \leq |z|^{(k-j)(\rho(f)-1+\epsilon)}
\end{equation} 
 for all $z$ satisfying $|z|\notin E\cup [0,1]$ and $|z| \geq R_0$ and for all $(k,j)\in \Gamma$.
\end{enumerate}

\end{lemma}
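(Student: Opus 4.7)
The plan is to adapt Gundersen's classical proof. The core is the case $(k,j)=(1,0)$: once we have a good pointwise estimate for $|f'/f|$, iteration yields every other $(k,j)$ by writing
\[\frac{f^{(k)}}{f^{(j)}} = \prod_{i=j}^{k-1}\frac{f^{(i+1)}}{f^{(i)}}\]
and applying the $(1,0)$ bound to each meromorphic function $f^{(i)}$, together with the standard comparison $T(r,f^{(i)})=O(T(r,f))$ (valid off a small exceptional set coming from the lemma on the logarithmic derivative). So the whole argument reduces to producing a sharp pointwise bound for the single logarithmic derivative $f'/f$.

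For that bound, I would start from the Poisson--Jensen formula, differentiated in $z$. This expresses $f'(z)/f(z)$ as a boundary integral of $\log|f(Re^{i\phi})|$ against the kernel $2Re^{i\phi}/(Re^{i\phi}-z)^2$, plus two rational sums over the zeros $a_\mu$ and poles $b_\nu$ of $f$ in $|z|<R$. Setting $R=\alpha r$ with $r=|z|$, the boundary integral and the reflected rational terms $\overline{a_\mu}/(R^2-\overline{a_\mu}z)$, $\overline{b_\nu}/(R^2-\overline{b_\nu}z)$ are each bounded in modulus by $C\,T(\alpha r,f)/r$, using $|Re^{i\phi}-z|\geq (\alpha-1)r$, the estimate $m(R,f)+m(R,1/f)\leq 2T(R,f)+O(1)$, and Jensen's formula $n(R,f)+n(R,1/f)=O(T(\alpha R,f))$.

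The real obstacle is controlling the direct sums $\sum_\mu 1/(z-a_\mu)$ and $\sum_\nu 1/(z-b_\nu)$, since $z$ may lie very close to some $a_\mu$ or $b_\nu$. For part (2), I would exclude the set
\[E=\bigcup_\mu\bigl\{\, r\geq 1 : \bigl| r-|a_\mu| \bigr| < |a_\mu|^{-s} \,\bigr\} \,\cup\, (\text{analogous set for the } b_\nu),\]
for a suitable exponent $s>0$. A direct counting argument using $n(r,f)+n(r,1/f)=O(T(\alpha r,f))$ shows $E$ has finite logarithmic measure, and off $E$ each direct sum is dominated by $C\bigl(n(\alpha r,f)+n(\alpha r,1/f)\bigr)\cdot r^{s}$, which after optimising $s$ is absorbed into the right-hand quantity $T(\alpha r,f)\log^\alpha r\,\log T(\alpha r,f)$. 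For part (1), a Fubini-type argument bounds, for each fixed large $R$, the angular measure of rays $\arg z=\psi_0$ passing within distance $|a_\mu|^{-s}$ of some $a_\mu$ or $b_\nu$ in the disk $|z|\leq R$ by a convergent series in the moduli $|a_\mu|$, $|b_\nu|$; taking a countable intersection over $R\to\infty$ leaves a full-measure set of admissible directions $\psi_0\in[0,2\pi)$.

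The finite-order refinement is then routine: when $\rho(f)<\infty$, the estimate $T(\alpha r,f)\leq (\alpha r)^{\rho(f)+\epsilon/2}$ holds for $r$ large, so the general bound collapses to $c\,r^{\rho(f)-1+\epsilon/2}(\log r)^{\alpha+o(1)}$, which is absorbed into $|z|^{\rho(f)-1+\epsilon}$. Raising to the $(k-j)$-th power through the product representation above then delivers both parts of the lemma simultaneously.
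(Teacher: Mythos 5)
You should first note that the paper does not actually prove this lemma: it is quoted, statement only, from Gundersen's 1988 paper \cite{log gg}, so the only thing to compare your sketch against is Gundersen's original argument, on which your outline is clearly modelled. The architecture you describe is the right one: reduce to $f'/f$ by telescoping, differentiate the Poisson--Jensen formula with $R=\alpha r$, and bound separately the boundary integral, the reflected terms, and the direct sums over nearby zeros and poles. The first two of these you handle correctly.

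The gap is in the direct sums, which is the entire difficulty of the lemma. Your exceptional set $E=\bigcup_\mu\bigl\{r:\bigl|\,r-|a_\mu|\,\bigr|<|a_\mu|^{-s}\bigr\}$ has logarithmic measure comparable to $\sum_\mu |a_\mu|^{-1-s}$, and this series converges only when the exponent of convergence of the zeros and poles is below $1+s$; the counting-function bound $n(r)=O(T(\alpha r,f))$ gives no control here, and for a general transcendental meromorphic $f$ --- in particular one of infinite order, which is precisely the case the paper needs --- the series diverges for every fixed $s$, so $E$ need not have finite logarithmic measure. Worse, even where the construction does apply, off $E$ you only obtain $\sum_\mu|z-a_\mu|^{-1}\leq C\,n(\alpha r)\,(\alpha r)^{s}$, and no choice of $s>0$ lets $n(\alpha r)\,r^{s}$ be absorbed into $\frac{T(\alpha r,f)}{r}\log^{\alpha}r\,\log T(\alpha r,f)$: that would require $r^{1+s}\lesssim\log^{\alpha}r\,\log T(\alpha r,f)$, which already fails for $f$ of finite order, where the right-hand side is only $O(\log^{\alpha+1}r)$. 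The same defect propagates to your Fubini argument for part (1), since the angular widths of the excluded directions again sum to $\sum_\mu|a_\mu|^{-1-s}$. The repair --- and the reason the factors $\log^{\alpha}r\,\log T(\alpha r,f)$ appear in the statement at all --- is to take the excluded radius about the $m$-th zero or pole $c_m$ proportional to $|c_m|$ divided by a quantity built from the counting function and characteristic, so that the logarithmic lengths form a series of the type $\sum_m 1/(m\log^{\alpha}m)$ that converges independently of the growth of $f$, while the reciprocal distances off the excluded set are bounded by exactly the logarithmic correction factors in the claimed estimate. With a fixed power $|a_\mu|^{-s}$ the estimate cannot close.
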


For the statement of next lemma we need to recall the following.
\begin{definition}\rm{\cite{banklang, sm}}
For a non-constant polynomial $P(z)=a_nz^n+\ldots+a_0$, the term $\delta(P,\theta)$ is defined as: $\delta(P,\theta)=\RE (a_ne^{\iota n \theta})$, where $\RE(z)$ denotes the real part of $z$. A ray $\gamma =re^{\iota\theta} $ is called a critical ray of $e^{P(z)}$ when $\delta(P,\theta)=0$. 
\end{definition}
The rays $\arg{z}=\theta$ such that $\delta(P,\theta)=0$ divides the complex plane into $2n$ sectors of equal length $\pi/n$. Also, $\delta(P,\theta)>0$ and $\delta(P,\theta)<0$ in the alternative sectors. Suppose that $0\leq\phi_1<\theta_1<\phi_2<\theta_2<\ldots<\phi_n<\theta_n<\phi_{n+1}=\phi_1+2\pi$ are $2n$ critical rays of $e^{P(z)}$ satisfying $\delta(P,\theta)>0$ for $\phi_i<\theta<\theta_i$ and $\delta(P,\theta)<0$ for $\theta_i<\theta<\phi_{i+1}$ where $i=1,2,3, \ldots, n$. For $\alpha $ and $\beta$, we fix the notation as follows:
$$\Omega(\alpha, \beta)=\{z:\alpha\leq\arg{z}\leq \beta\}; 0\leq\alpha<\beta\leq2\pi.$$

The following lemma established an estimate for a transcendental entire function of integral order of growth.
\begin{lemma}\label{implem}  \rm{\cite{banklang}}
Let $A(z)=v(z)e^{P(z)}$ be an entire function, where $P(z)$ is a polynomial of degree $n$ and $v(z)$ be an entire function with order less than $n$. Then, for every $\epsilon>0$, there exists $E \subset [0,2\pi)$ of linear measure zero such that

\begin{enumerate}[(i)]

\item for $ \theta \in [0,2\pi)\setminus E $ such that $\delta(P,\theta)>0$, there exists $ R>1 $ such that
\begin{equation}\label{eqA1}
 \exp \{ (1-\epsilon) \delta(P,\theta)r^n \} \leq|A(re^{\iota \theta})| \leq  \exp \{ (1+\epsilon) \delta(P,\theta)r^n \}
\end{equation}
for $r>R.$

\item for $\theta \in [0,2\pi)\setminus E$ such that $\delta(P,\theta)<0$, there exists $R>1$ such that 
\begin{equation}\label{eq2le}
 \exp \{(1+\epsilon) \delta(P,\theta)r^n \} \leq |A(re^{\iota \theta})| \leq \exp \{ (1-\epsilon)\delta(P,\theta) r^n \} 
\end{equation}
for $r>R.$
\end{enumerate}
\end{lemma}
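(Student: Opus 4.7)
The plan is to take logarithms and study
$$\log|A(re^{\iota\theta})|=\log|v(re^{\iota\theta})|+\RE\bigl(P(re^{\iota\theta})\bigr)$$
direction by direction, so that the two factors can be treated independently. Writing $P(z)=a_nz^n+Q(z)$ with $\deg Q\le n-1$, a routine expansion of the polynomial yields
$$\RE\bigl(P(re^{\iota\theta})\bigr)=\delta(P,\theta)\,r^n+O(r^{n-1})$$
uniformly in $\theta$ as $r\to\infty$. This isolates the main term $\delta(P,\theta)\,r^n$ and pushes all the real difficulty onto controlling the factor $v$.

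For $v$, fix any $\rho_0$ with $\rho(v)<\rho_0<n$. The upper estimate $\log|v(re^{\iota\theta})|\le\log M(r,v)\le r^{\rho_0}$ for $r$ large is immediate from the definition of order of growth and holds for every $\theta$. The lower bound $\log|v(re^{\iota\theta})|\ge-r^{\rho_0}$ is the crux of the argument, and it is where the exceptional set $E$ will come from: one must discard rays along which $v$ is abnormally small. I would obtain this by invoking the Hadamard factorisation of $v$ together with the classical minimum-modulus estimate for the associated canonical product away from its zeros; a standard covering-lemma argument then shows that the set of bad directions has linear measure zero. This is the main obstacle, since a uniform-in-$\theta$ lower bound is simply false (circles passing close to zeros of $v$ force $|v|$ to be small), so the whole strength of the lemma rests on a careful choice of $E$.

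Combining the two ingredients, for $\theta\in[0,2\pi)\setminus E$ and $r$ sufficiently large one gets
$$\log|A(re^{\iota\theta})|=\delta(P,\theta)\,r^n+O(r^{n-1})+O(r^{\rho_0})=\delta(P,\theta)\,r^n\bigl(1+o(1)\bigr),$$
since $\rho_0<n$. When $\delta(P,\theta)>0$, absorbing the $o(1)$ terms into $\pm\epsilon$ for $r>R$ produces the two-sided bound in (i); when $\delta(P,\theta)<0$, multiplying through by the negative factor $\delta(P,\theta)$ reverses the inequalities and yields (ii) with the same $E$. The rest of the argument is routine error-term bookkeeping; the only genuinely non-trivial ingredient is the minimum-modulus estimate for $v$ that produces the zero-measure set of exceptional directions.
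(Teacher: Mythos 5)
Your outline is essentially sound, but note that the paper itself offers no proof of this lemma: it is quoted verbatim from Bank--Laine--Langley \cite{banklang}, so the only comparison available is with the standard argument in the literature. That standard route differs from yours in how it handles the crux you correctly identified, namely the lower bound for $|v|$ along rays: instead of Hadamard factorisation plus a minimum-modulus estimate for the canonical product, one applies a ray-wise logarithmic-derivative estimate (as in part (1) of Lemma 1 of this paper), which gives $|v'(re^{\iota\theta})/v(re^{\iota\theta})|\leq r^{\rho(v)-1+\epsilon}$ for all large $r$ on every ray outside a set of directions of linear measure zero, and then integrates $\frac{d}{dt}\log|v(te^{\iota\theta})|$ along the ray from a fixed base point to get $\bigl|\log|v(re^{\iota\theta})|\bigr|=O(r^{\rho_0})$ with $\rho_0<n$; combined with $\RE P(re^{\iota\theta})=\delta(P,\theta)r^n+O(r^{n-1})$ this yields (i) and (ii) exactly as in your final paragraph. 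Your route can be made to work, but the ``standard covering-lemma argument'' deserves one explicit sentence: the minimum-modulus theorem excludes discs of radius $|a_k|^{-h}$ about the zeros $a_k$ of the canonical product, a fixed ray could a priori meet infinitely many such discs, and one must observe that the angular width of the $k$-th disc is comparable to $|a_k|^{-h-1}$, which is summable, so that (by a Borel--Cantelli type argument) the set of directions meeting infinitely many discs has linear measure zero; one should also record that the factor $e^{Q(z)}$ from the Hadamard factorisation only contributes $O(r^{\deg Q})$ with $\deg Q\leq\rho(v)<n$. What each approach buys: the logarithmic-derivative route is shorter here because the needed ray-wise estimate is already available as a quoted lemma, while your factorisation argument is self-contained classical function theory and even produces an exceptional set $E$ independent of $\epsilon$.
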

Next lemma is from \cite{besi} and give estimate for an entire function of order less than one.
\begin{lemma}\label{gglemma}
Let $g(z)$ be an entire function of order $\rho$, where $0<\rho<\frac{1}{2}$ and let $\epsilon>0$ be a given constant. Then, there exists a set $F \subset [0,\infty)$ that has upper logarithmic density at least $1-2\rho$ such that $|g(z)| >\exp (|z|^{\rho-\epsilon})$ for all $z$ satisfying $|z| \in F.$ 
\end{lemma}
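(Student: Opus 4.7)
The claim is a minimum-modulus estimate of Wiman-Besicovitch-Barry type for entire functions of order $<1/2$. Since $\rho<1$, Hadamard's theorem writes $g$ as a canonical product of genus zero,
$$g(z)=c\,z^{m}\prod_{n}\Bigl(1-\frac{z}{a_{n}}\Bigr),\qquad \sum_{n}|a_{n}|^{-1}<\infty,$$
and the additive expression
$$\log|g(re^{i\theta})|=m\log r+\log|c|+\sum_{n}\log\Bigl|1-\frac{r}{a_{n}}e^{i(\theta-\arg a_{n})}\Bigr|$$
is the basis for bounding $L(r,g)=\min_{|z|=r}|g(z)|$ from below.

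The key input is the sharp $\cos(\pi\rho)$-theorem in the form of Wiman and Barry: for every $\alpha\in(\rho,1/2)$, the set $B_{\alpha}=\{r>0:\log L(r,g)\geq\cos(\pi\alpha)\log M(r,g)\}$ satisfies $\underline{\log dens}(B_{\alpha})\geq 1-\rho/\alpha$. Pushing $\alpha\nearrow 1/2$ produces a set of lower log-density close to $1-2\rho$ on which the positive multiplier $\cos(\pi\alpha)$ transfers large values of $M(r,g)$ to large values of $L(r,g)$.

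I would then pair this with a growth lower bound: the order hypothesis $\rho(g)=\rho>0$, combined with the convexity of $\log M(r,g)$ in $\log r$ and the regularity of the zero distribution forced by $\sum|a_{n}|^{-1}<\infty$, gives a set $G\subset(0,\infty)$ of upper log-density essentially $1$ on which $\log M(r,g)>r^{\rho-\eta}$ for any prescribed $\eta>0$. The density bookkeeping
$$\overline{\log dens}(A\cap B)\geq\overline{\log dens}(A)+\underline{\log dens}(B)-1$$
applied to $G$ and $B_{\alpha}$ then yields a set $F$ with $\overline{\log dens}(F)\geq 1-2\rho$ on which
$$\log|g(z)|\geq \cos(\pi\alpha)\,r^{\rho-\eta}>r^{\rho-\epsilon}$$
for all large $r=|z|\in F$, provided $\eta<\epsilon$ is chosen so that the multiplicative constant $\cos(\pi\alpha)$ is absorbed into the exponent.

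The main obstacle will be the density bookkeeping between the cosine set $B_{\alpha}$ (a lower-density assertion) and the growth set $G$ (an upper-density assertion), and in particular the propagation of the pointwise growth bound $\log M(r,g)>r^{\rho-\eta}$ from an unbounded sequence (supplied directly by the definition of order) to a set of upper log-density close to $1$. Tracking the sharp constant $1-2\rho$ in the final intersection requires the Hadamard product structure together with a Polya-peak argument on $\log M(r,g)$, and this is where the full quantitative content of the Besicovitch theorem sits.
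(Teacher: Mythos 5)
The paper offers no internal proof of this lemma to compare against: it is quoted from Besicovitch \cite{besi} (see also Barry \cite{pd}), so your sketch has to stand on its own, and as written it has a genuine gap at exactly the point you flag as "the main obstacle". The step asserting that the order hypothesis, convexity of $\log M(r,g)$ in $\log r$, and the regularity "forced" by $\sum_n |a_n|^{-1}<\infty$ produce a set $G$ of upper logarithmic density essentially $1$ on which $\log M(r,g)>r^{\rho-\eta}$ is not only unproved, it is false in general. The order is a $\limsup$, so it supplies only a sequence $r_k\to\infty$ with $\log M(r_k,g)>r_k^{\rho-\eta'}$; monotonicity and convexity of $\log M$ then extend each $r_k$ only to an interval of the form $[r_k,\,r_k^{(\rho-\eta')/(\rho-\eta)}]$, whose logarithmic length is a fraction of order $\eta/\rho$ of $\log r_k$. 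If the zeros of the genus-zero product are placed in sparse blocks, so that $n(r)$ is constant over long logarithmic ranges and there $\log M(r,g)\approx n(r_k)\log r\ll r^{\rho-\eta}$, the set $\{r:\log M(r,g)>r^{\rho-\eta}\}$ has upper logarithmic density comparable to $\eta/\rho$, far below $1$ (and below $1-2\rho$) when $\eta$ is small. So the Hadamard factorization does not buy the needed regularity, and a P\'olya-peak argument gives peak intervals of bounded logarithmic length, not a set of logarithmic density near $1$. This missing transfer from a sequence of large values of $\log M$ to a set of large logarithmic density is precisely the quantitative content of Besicovitch's theorem; it cannot be recovered from Barry's $\cos\pi\alpha$ estimate plus the definition of order, so your argument as structured does not close.

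Two further remarks. First, for each fixed $\alpha<1/2$ Barry's theorem gives $\underline{\log dens}(B_\alpha)\geq 1-\rho/\alpha$, which is strictly smaller than $1-2\rho$; "pushing $\alpha\nearrow 1/2$" therefore needs an explicit diagonal construction (pieces of $B_{\alpha_k}$ taken over longer and longer ranges) to reach the stated bound $1-2\rho$ — that part is repairable, unlike the first gap. Second, your plan does go through essentially verbatim if "order" is replaced by \emph{lower} order $\mu(g)$, since then $\log M(r,g)>r^{\mu-\eta}$ holds for all sufficiently large $r$ and only the density bookkeeping with $B_\alpha$ remains; for the statement as given, the honest route is to invoke Besicovitch's (or Barry's) theorem in its precise form, which is exactly what the paper does by citing \cite{besi}.
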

The following result is an easy consequence of Phragm$\acute{\bf{e}}$n-Lindel$\ddot{\bf{o}}$f theorem.
\begin{lemma}\label{pharlem}\rm{\cite{wang}}
Suppose that $B(z)$ is an entire function with $\rho(B)\in \big[1/2,\infty)$. Then there exists a sector $\Omega(\alpha, \beta),$ $\beta-\alpha\geq \pi/\rho(B)$, such that
$$\limsup_{r\rightarrow \infty}\frac{\log \log |B(re^{\iota \theta})|}{\log r}\geq \rho(B)$$
for all $\theta \in \Omega(\alpha,\beta)$, where $0\leq\alpha<\beta\leq2 \pi.$
\end{lemma}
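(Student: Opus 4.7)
The approach is a proof by contradiction built around the Phragm\'en--Lindel\"of principle. Write $\rho := \rho(B)$ and set
$$E = \Bigl\{ \te \in [0,2\pi) : \limsup_{r\to\infty} \tfrac{\log\log|B(re^{\iota \te})|}{\log r} < \rho \Bigr\}.$$
If the desired sector does not exist, then every closed sub-arc of length at least $\pi/\rho$ meets $E$; equivalently, the complement $[0,2\pi)\setminus E$ contains no arc of length $\geq \pi/\rho$. The hypothesis $\rho\geq 1/2$ makes $\pi/\rho\leq 2\pi$, so this is a genuine restriction on $E$.

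From this I would select cyclically ordered angles $\te_1 < \te_2 < \cdots < \te_N < \te_1 + 2\pi$ inside $E$ with $\te_{j+1} - \te_j < \pi/\rho$ for every $j$. By the definition of $E$, each $\te_j$ comes with $\delta_j > 0$ and $R_j$ such that $\log|B(re^{\iota \te_j})| \leq r^{\rho - \delta_j}$ for all $r \geq R_j$; set $\delta := \min_j \delta_j > 0$. On each sector $S_j = \Omega(\te_j, \te_{j+1})$, whose opening $\gamma_j$ is strictly less than $\pi/\rho$, I would apply Phragm\'en--Lindel\"of as follows. After rotating so that $S_j$ is symmetric about the positive real axis, multiply $B$ by $\exp(-Cz^{\rho-\delta/2})$ using the principal branch. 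Because $(\rho-\delta/2)(\gamma_j/2) < \pi/2$, the real part of $z^{\rho-\delta/2}$ is bounded below by $c_0 r^{\rho-\delta/2}$ on the bounding rays, so for $C$ large enough the multiplier absorbs the boundary growth $\exp(r^{\rho-\delta})$ and the product is bounded on $\partial S_j$. Inside $S_j$, the order-$\rho$ bound $|B(z)|\leq\exp(|z|^{\rho+\epsilon})$ combined with $\rho + \epsilon < \pi/\gamma_j$ (for $\epsilon$ sufficiently small) places the modified function in the classical Phragm\'en--Lindel\"of range, forcing it to be bounded throughout $S_j$ and hence $\log|B(re^{\iota\te})| \leq C_1 r^{\rho-\delta/2}$ uniformly on $S_j$ for large $r$.

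Summing these $N$ sector estimates would yield $\log M(r,B) \leq C_2 r^{\rho - \delta/2}$ for all sufficiently large $r$, forcing $\rho(B) \leq \rho - \delta/2$, a contradiction. The main technical obstacle is the calibration of the auxiliary multiplier $\exp(-Cz^{\rho-\delta/2})$ in sectors whose opening $\gamma_j$ may be arbitrarily close to $\pi/\rho$: the margin $\pi/\gamma_j - \rho$ can be arbitrarily small, so the exponent $\rho-\delta/2$ must be chosen to simultaneously dominate the boundary growth $r^{\rho-\delta}$ (which forces the exponent to exceed $\rho-\delta$) and keep the ambient growth of the modified function strictly below order $\pi/\gamma_j$ in the sector (which forces it to stay close to $\rho$). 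The quantitative balance between $\delta$, $\gamma_j$, and $\epsilon$ is where the proof really lives; once it is set correctly the covering-plus-Phragm\'en--Lindel\"of scheme closes cleanly.
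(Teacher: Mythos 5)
The paper itself gives no proof of this lemma: it is quoted from \cite{wang} with only the remark that it is a consequence of the Phragm\'en--Lindel\"of theorem, so your overall scheme (cover the circle by sectors whose bounding rays lie in the ``bad'' set $E$ of slow radial growth, apply Phragm\'en--Lindel\"of in each sector, and contradict $\rho(B)=\rho$) is the expected route. Your multiplier calibration, incidentally, is not where the difficulty lies: since $\mu=\rho-\delta/2<\rho<\pi/\gamma_j$ one automatically has $\mu\gamma_j/2<\pi/2$, so $\cos(\mu\theta)$ is bounded below by a positive constant on the whole closed sector, and choosing $\epsilon$ with $\rho+\epsilon<\pi/\gamma_j$ puts $B(z)e^{-Cz^{\mu}}$ squarely in the classical Phragm\'en--Lindel\"of range; that part of your argument closes.

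The genuine gap is the selection step. From your negation, ``every closed arc of length at least $\pi/\rho$ meets $E$,'' it does not follow that one can choose finitely many points of $E$ with consecutive gaps \emph{strictly} less than $\pi/\rho$: take $\rho=1$ and $E=\{0,\pi\}$; every closed arc of length $\geq\pi$ contains $0$ or $\pi$, yet any selection from $E$ leaves a gap of exactly $\pi$. This borderline configuration is not a removable technicality, because it is precisely where the Phragm\'en--Lindel\"of step also breaks: in a sector of opening exactly $\pi/\rho$, boundedness (or slow growth) on the two bounding rays does not control a function of order $\rho$ inside --- $e^{z}$ on the closed right half-plane is the standard counterexample, and indeed for $B(z)=e^{z}$ the rays with $\limsup_{r\to\infty}\log\log|B(re^{\iota\theta})|/\log r\geq 1$ fill only the open half-plane $|\theta|<\pi/2$, which shows the statement must in effect be read for the open sector. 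To close the argument one must show that all complementary gaps of $E$ are strictly shorter than $\pi/\rho$ with a uniform strict bound (under the open-sector negation this can be done: gaps of length $\geq\pi/\rho$ are excluded outright, only finitely many gaps can exceed $\pi/(2\rho)$, and gap endpoints are approximated by points of $E$), or else invoke an indicator-function/trigonometric-convexity argument as in \cite{wang} to handle the extremal opening. As written, the passage from your negation to the strict-gap net is a non sequitur, and the contradiction collapses exactly in the extremal case.
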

Next lemma give property of an entire function with Fabry gaps and can be found in \cite{jlongfab}, \cite{zhe}. 
\begin{lemma}\label{fablemma}
Let $g(z)=\sum_{n=0}^{\infty} a_{\lambda_n}z^{\lambda_n}$ be an entire function of finite order with Fabry gaps, and $h(z)$ be an entire function with $\rho(h)=\sigma \in (0,\infty)$. Then for any given $\epsilon\in (0,\sigma)$, there exists a set $F\subset (1,+\infty)$ satisfying $ \overline{\log dens}(F) \geq \xi $, where $\xi\in (0,1)$ is a constant  such that for all $|z| =r \in F$, one has
$$ \log M(r,h) > r^{\sigma-\epsilon}, \quad \log m(r,g) > (1-\xi)\log M(r,g),$$

where $M(r,h)=\max \{ |h(z)|: |z|=r\} $, $m(r,g)=\min \{ |g(z)|: |z|=r\}$ and $M(r,g)= \max \{ |g(z)|: |z|=r\} $.
\end{lemma}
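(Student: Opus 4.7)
The plan is to combine two ingredients: a P\'olya-type density refinement of the Fabry gap theorem applied to $g$, and a monotonicity argument that turns the $\limsup$ definition of $\rho(h)=\sigma$ into an inequality on a set of positive upper logarithmic density. The set $F$ will be the intersection of the two sets thus produced.

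For the Fabry-gap side I would invoke (without re-proving) the P\'olya-type statement standard in this area and used in \cite{jlongfab, zhe}: if $g(z)=\sum a_{\lambda_n}z^{\lambda_n}$ has Fabry gaps, then for each $\eta\in(0,1)$ the set
$F_g(\eta):=\{r>0:\log m(r,g)>(1-\eta)\log M(r,g)\}$
has lower logarithmic density at least $1-\eta$; equivalently, $\overline{\log dens}(F_g(\eta)^c)\leq\eta$. This is the only substantial external input.

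For $h$: since $\rho(h)=\sigma\in(0,\infty)$, the definition of order supplies a sequence $r_k\to\infty$ with $\log M(r_k,h)>r_k^{\sigma-\epsilon/2}$. Set $\alpha:=(\sigma-\epsilon/2)/(\sigma-\epsilon)>1$. Monotonicity of $r\mapsto\log M(r,h)$ then yields
\[
\log M(r,h)\;\geq\;\log M(r_k,h)\;>\;r_k^{\sigma-\epsilon/2}\;\geq\;r^{\sigma-\epsilon}\qquad\text{for }r\in[r_k,r_k^{\alpha}].
\]
After thinning $\{r_k\}$ so that the intervals $I_k:=[r_k,r_k^{\alpha}]$ are pairwise disjoint, the union $S:=\bigcup_k I_k$ satisfies $\log M(r,h)>r^{\sigma-\epsilon}$ throughout $S$, and $\overline{\log dens}(S)\geq\alpha_0$ with $\alpha_0:=(\alpha-1)/\alpha>0$, since within each $I_k$ the logarithmic measure is $(\alpha-1)\log r_k$ while $[1,r_k^{\alpha}]$ has logarithmic length $\alpha\log r_k$.

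To combine the two, fix $\eta\in(0,\alpha_0/2)$ and set $F:=S\cap F_g(\eta)$. The elementary inequality
\[
\overline{\log dens}(A\cap B)\;\geq\;\overline{\log dens}(A)-\overline{\log dens}(B^c),
\]
which holds because $\limsup(f-g)\geq\limsup f-\limsup g$, gives $\overline{\log dens}(F)\geq\alpha_0-\eta$. Choose $\xi:=\alpha_0/2\in(0,1)$; then $\eta\leq\xi\leq\alpha_0-\eta$, so $\overline{\log dens}(F)\geq\xi$, and because $1-\eta\geq 1-\xi$ the defining inequality of $F_g(\eta)$ upgrades to $\log m(r,g)>(1-\xi)\log M(r,g)$ on $F$, as required.

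The main obstacle is precisely the P\'olya-type density refinement of Fabry's gap theorem that is the principal input; once it is in hand, the only remaining subtlety is the bookkeeping needed to choose a single constant $\xi$ that serves both conclusions of the lemma simultaneously.
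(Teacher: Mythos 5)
The paper does not actually prove this lemma: it is quoted as a known preliminary result from \cite{jlongfab} and \cite{zhe}, so there is no internal proof to compare against. Your argument is correct and is essentially the standard proof given in those sources: the $g$-part rests on the Fuchs--P\'olya gap theorem (for a Fabry gap series, $\log m(r,g)>(1-\eta)\log M(r,g)$ holds outside a set of logarithmic density zero, which is even stronger than the density bound you assume), while the $h$-part follows from the definition of $\rho(h)=\sigma$ by propagating $\log M(r_k,h)>r_k^{\sigma-\epsilon/2}$ over the intervals $[r_k,r_k^{\alpha}]$ with $\alpha=(\sigma-\epsilon/2)/(\sigma-\epsilon)$, giving a set of upper logarithmic density at least $(\alpha-1)/\alpha$. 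Your density bookkeeping is sound: the inequality $\overline{\log dens}(A\cap B)\geq\overline{\log dens}(A)-\overline{\log dens}(B^{c})$ is valid, and choosing $\eta<\xi:=\alpha_0/2$ makes the single constant $\xi$ serve both the density bound and the minimum-modulus bound (the passage from $(1-\eta)$ to $(1-\xi)$ uses only that $\log M(r,g)>0$ for large $r$, which holds since a Fabry gap series is transcendental). The one caveat is that the ``P\'olya-type density refinement'' you import is genuinely the heavy half of the lemma; since the paper itself cites the whole lemma rather than proving it, treating that gap theorem as an external input is entirely in keeping with the paper's level of rigor.
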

The following remark follows from the above lemma.
\begin{remark} \label{fabremark}
Suppsoe that $g(z)=\sum_{n=0}^{\infty} a_{\lambda_n}z^{\lambda_n}$ be an entire function of order $\sigma \in (0,\infty)$ with Fabry gaps then for any given $\epsilon >0, \quad (0<2\epsilon <\sigma)$, there exists a set $F\subset (1,+\infty)$ satisfying $\overline{\log dens}(F) \geq \xi$, where $\xi \in (0,1)$ is a constant such that for all $|z| =r \in F$ , one has
$$ |g(z)|> M(r,g)^{(1-\xi)}> \exp{\left((1-\xi) r^{\sigma-\epsilon}\right)}>\exp{\left(r^{\sigma-2\epsilon}\right)}.$$
\end{remark}
The next lemma gave the estimate for a meromorphic function.
\begin{lemma}\label{rholem}\rm{\cite{kwonkim}}
Suppose that $g$ is a meromorphic function of order $\rho\in[0,\infty)$.  For given $\zeta>0$ and $0<l<1/2$, there exists a constant $K(\rho,\zeta)$ and a set $F_{\zeta}\subset [0,\infty)$ of lower logarithmic density greater than $1-\zeta$ such that
\begin{equation*}
r\int_{J} \bigg|\frac{g'(re^{\iota \theta})}{g(re^{\iota \theta})}\, d\theta\bigg|<K(\rho,\zeta)\bigg(l\log{\frac{1}{l}}\bigg)T(r,g)
\end{equation*}
for all $r\in F_{\zeta}$ and for every interval $J\subset [0,2\pi)$ of length $l$.
\end{lemma}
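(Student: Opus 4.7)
The plan is to start from the Poisson--Jensen representation of $g'/g$ on a disk of radius $\alpha r$ (for a fixed $\alpha>1$, say $\alpha=2$), then integrate termwise over the arc $J$. For $z=re^{\iota\theta}$ avoiding zeros and poles of $g$, one has
\begin{equation*}
\frac{g'(z)}{g(z)} = \frac{1}{2\pi}\int_0^{2\pi}\frac{2\alpha re^{\iota\phi}}{(\alpha re^{\iota\phi}-z)^2}\,\log|g(\alpha re^{\iota\phi})|\,d\phi + \sum_{c_n}\varepsilon_n\left(\frac{1}{z-c_n}-\frac{\overline{c_n}}{\alpha^2r^2-\overline{c_n}z}\right),
\end{equation*}
where the $c_n$ range over the zeros and poles of $g$ inside $|w|<\alpha r$ and $\varepsilon_n=\pm1$ records which.

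After taking moduli and integrating over $J$, the boundary term is bounded by $C\,l\cdot r^{-1}\cdot(m(\alpha r,g)+m(\alpha r,1/g))\leq Cl\,T(\alpha r,g)/r$ through the trivial estimate $|2\alpha re^{\iota\phi}/(\alpha re^{\iota\phi}-z)^2|\leq C/r$ for $|z|=r$ and the first fundamental theorem. The Schwarz-reflected pieces $\overline{c_n}/(\alpha^2r^2-\overline{c_n}z)$ contribute at most $O(l/r)$ per pole and sum to $O(l\,T(\alpha^2 r,g)/r)$ after invoking Jensen's formula to control $n(\alpha r,g)+n(\alpha r,1/g)\leq C\,T(\alpha^2 r,g)$.

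The heart of the matter is the pointwise arc estimate
\begin{equation*}
\int_J\frac{d\theta}{|re^{\iota\theta}-c|}\;\leq\;\frac{C}{r}\left(l+l\log\frac{1}{l}\right),\qquad |c|<\alpha r,
\end{equation*}
which, by a change of variable $\theta\mapsto\theta-\arg c$, reduces to an explicit integral of $(r^2+|c|^2-2r|c|\cos\phi)^{-1/2}$. The bound is uniform in $c$, with the $l\log(1/l)$ factor becoming sharp precisely when $|c|$ is close to $r$ and $\arg c$ lies in or near $J$; the countable measure-zero set of radii on which $g$ has a zero or pole on $|w|=r$ is discarded at the outset. Summing over the $c_n$ yields
\begin{equation*}
\int_J\left|\frac{g'(re^{\iota\theta})}{g(re^{\iota\theta})}\right|d\theta\;\leq\;\frac{C}{r}\left(l\log\frac{1}{l}\right)T(\alpha^2 r,g).
\end{equation*}

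Finally, to replace $T(\alpha^2 r,g)$ by $T(r,g)$ on a large set of radii, I would use the standard fact that a meromorphic function $g$ of finite order $\rho$ admits, for every $\zeta>0$, a constant $K_0(\rho,\zeta)$ and a set $F_\zeta\subset[0,\infty)$ of lower logarithmic density exceeding $1-\zeta$ on which $T(\alpha^2 r,g)\leq K_0(\rho,\zeta)\,T(r,g)$; this is obtained by excluding the set where $T$ grows faster than at its average Pólya-type rate and estimating the logarithmic measure of that exceptional set using $\rho(g)<\infty$. Multiplying the previous display by $r$ and absorbing constants gives the claimed inequality with $K(\rho,\zeta):=CK_0(\rho,\zeta)$.

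The principal obstacle is the sharp arc estimate in the third paragraph: handling the near-singular regime when a zero or pole of $g$ sits almost on the circle $|w|=r$ near the midpoint of $J$ requires careful case analysis (dividing into $||c|-r|\geq rl$ and $||c|-r|<rl$, and within the latter splitting according to whether $\arg c$ lies within $2l$ of $J$). Everything else is bookkeeping: the Poisson--Jensen decomposition, Jensen's formula for the counting functions, and the regularity-of-growth lemma for the exceptional set.
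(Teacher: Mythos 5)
The paper itself offers no argument for this lemma --- it is quoted from \cite{kwonkim}, where it is obtained from an Edrei--Fuchs type estimate for the integral of $|g'/g|$ over small arcs --- so your proposal must stand on its own, and its central step is false. There is no bound $\int_J|re^{\iota\theta}-c|^{-1}\,d\theta\le \frac{C}{r}\,\big(l+l\log\frac{1}{l}\big)$ uniform in $|c|<\alpha r$: taking $c=(r+\epsilon)e^{\iota\theta_0}$ with $\theta_0$ the midpoint of $J$ gives $|re^{\iota\theta}-c|\asymp\max(\epsilon,\,r|\theta-\theta_0|)$, so the integral is of size $\frac{1}{r}\log\frac{rl}{\epsilon}$, which blows up as $\epsilon\to 0$. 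Discarding the measure-zero set of radii at which a zero or pole lies exactly on $|w|=r$ does not help, because the estimate deteriorates continuously as a zero approaches the circle, and no case analysis can rescue a pointwise inequality that is untrue. The example $g(z)=\sin z$ makes the damage concrete: $T(r,g)\sim 2r/\pi$ is so regular that your final exceptional set (built only from the comparison $T(\alpha^2 r,g)\le K_0T(r,g)$) excludes essentially no radii, yet for $r$ slightly larger than $n\pi$ and $J$ centred on the positive real axis one has $r\int_J|g'/g|\,d\theta\gtrsim\log\frac{rl}{r-n\pi}$, which exceeds $K\,l\log\frac1l\,T(r,g)$ once $r-n\pi$ is small enough. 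So your argument, if it worked, would prove a false statement.

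The missing idea is that the exceptional set $F_\zeta$ must be tied to the zeros and poles of $g$, not only to the regularity of $T(r,g)$. In the proof behind \cite{kwonkim}, the terms $1/(z-c_n)$ coming from the differentiated Poisson--Jensen formula are controlled by exploiting that $1/|z-c_n|$ is integrable in two dimensions: one integrates the arc estimate over $r$ as well (this is the content of the Edrei--Fuchs small-arcs lemma), which converts each singularity into a logarithmic one in $\theta$ whose integral over a set of measure $l$ is $O\big(l\log\frac1l\big)$ uniformly; the resulting bound involves $T(\alpha r,g)$ via the counting functions, and one then discards a set of radii of small logarithmic density on which the single-radius integral may still be large. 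Only after that does the growth comparison $T(\alpha r,g)\le K_0(\rho,\zeta)T(r,g)$ outside a set of small upper logarithmic density --- the one genuinely correct ingredient in your last paragraph, valid because $\rho(g)<\infty$ --- enter to finish. Your handling of the boundary term and of the reflected terms $\overline{c_n}/(\alpha^2r^2-\overline{c_n}z)$ is fine, but without the averaging in $r$ (or some equivalent device linking $F_\zeta$ to the moduli $|c_n|$) the proof does not close.
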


The next lemma provided an upper bound for the hyper-order of growth of solutions $f$ of equation \eqref{sde}.
\begin{lemma}\label{wuthm}\rm{ \cite{pz}}
Suppose that $A(z)$ and $B(z)$ are entire functions of finite order. Then, 
\begin{equation*}
\rho_2(f)\leq\max\{ \rho(A), \rho(B)\}
\end{equation*}
for all solutions $f$ of  equation \eqref{sde}.
\end{lemma}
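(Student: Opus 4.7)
The plan is to apply the Wiman--Valiron method to the equation $f''+A f'+B f=0$ in order to bound the central index of $f$, and then translate this into a bound on $\log\log M(r,f)$. Set $\alpha=\max\{\rho(A),\rho(B)\}$ and fix $\epsilon>0$; by the definition of order, for all sufficiently large $r$ one has $|A(z)|,|B(z)|\leq \exp(r^{\alpha+\epsilon})$ whenever $|z|=r$. Polynomial solutions satisfy $\rho_2(f)=0$, so I may assume $f$ is transcendental entire.

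First I would invoke the Wiman--Valiron theorem: with $\nu_f(r)$ the central index and $\mu_f(r)$ the maximum term of $f$, there is a set $E\subset(1,\infty)$ of finite logarithmic measure such that for each $r\notin E$ one can choose $z_r$ with $|z_r|=r$, $|f(z_r)|=M(r,f)$, and
$$\frac{f^{(k)}(z_r)}{f(z_r)}=\left(\frac{\nu_f(r)}{z_r}\right)^{k}(1+o(1)),\qquad k=1,2.$$
Dividing the differential equation by $f(z_r)$, applying the triangle inequality, and taking moduli yields
$$\left(\frac{\nu_f(r)}{r}\right)^{2}(1+o(1))\;\leq\;|A(z_r)|\,\frac{\nu_f(r)}{r}(1+o(1))+|B(z_r)|.$$
This is a quadratic inequality in $\nu_f(r)/r$; solving it and inserting the coefficient bounds produces $\nu_f(r)\leq C\bigl(|A(z_r)|+|B(z_r)|^{1/2}\bigr)r\leq r\exp(r^{\alpha+\epsilon})$ (after absorbing constants into a slightly larger exponent) for all sufficiently large $r\notin E$.

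Next I would pass to a bound on $M(r,f)$ via the standard Wiman--Valiron relations $\log\mu_f(r)=\int_{0}^{r}\nu_f(t)/t\,dt+O(1)$ and Valiron's inequality $\log M(r,f)\leq\log\mu_f(r)+O(\log\nu_f(r))$. Since $\nu_f(t)/t\leq \exp(t^{\alpha+\epsilon})$, bounding the integrand by its value at $t=r$ gives $\log M(r,f)\leq r\exp(r^{\alpha+\epsilon})$, and hence $\log\log M(r,f)\leq r^{\alpha+2\epsilon}$ for $r\notin E$ large. The exceptional set is handled by the usual device: since $m_l(E)<\infty$, any large $r$ admits a nearby $r'\notin E$ with $r'/r\to1$, and the monotonicity of $M(\cdot,f)$ transfers the bound to all large $r$. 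Using $T(r,f)\leq\log^{+}M(r,f)+O(1)$, one gets $\rho_2(f)\leq\alpha+2\epsilon$, and letting $\epsilon\to0$ yields $\rho_2(f)\leq\max\{\rho(A),\rho(B)\}$.

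The main obstacle is executing the Wiman--Valiron step carefully: one must track the $(1+o(1))$ factors through the quadratic inequality and verify that the resulting bound on $\nu_f(r)$ is tight enough. The subsequent passage from $\nu_f(r)$ to $\log\log M(r,f)$ and the removal of the exceptional set are standard bookkeeping once the central-index estimate is in hand.
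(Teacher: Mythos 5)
The paper offers no proof of this lemma: it is quoted verbatim from Chen \cite{pz}. Your Wiman--Valiron argument is in substance the standard proof of that cited result, and it is correct in outline: at a maximum-modulus point $z_r$ the relation $f^{(k)}(z_r)/f(z_r)=\bigl(\nu_f(r)/z_r\bigr)^{k}(1+o(1))$ turns the equation into a quadratic inequality for $\nu_f(r)/r$, giving $\nu_f(r)\le Cr\bigl(|A(z_r)|+|B(z_r)|^{1/2}\bigr)\le r\exp\bigl(r^{\alpha+\epsilon}\bigr)$ off a set $E$ of finite logarithmic measure, and from a bound on the central index one deduces the bound on the hyper-order. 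One bookkeeping point needs fixing: in the step $\log\mu_f(r)=\int_0^r \nu_f(t)/t\,dt+O(1)$ you bound the integrand by $\exp\bigl(t^{\alpha+\epsilon}\bigr)$, but that estimate is available only for $t\notin E$, while the integration runs over all $t\le r$; you should first extend the central-index bound to all large $t$ (using that $\nu_f$ is nondecreasing and that every large $t$ has a nearby non-exceptional point $t'\ge t$ with $t'/t$ bounded, since $m_l(E)<\infty$), or split off $E\cap(1,r)$ where $\nu_f(t)\le\nu_f(r)$ and the logarithmic measure is finite; transferring only the final bound on $M(r,f)$, as you propose, comes too late to justify the integral estimate, though the same device you invoke repairs it. You could also shorten the second half: for a transcendental entire $f$ one has $\rho_2(f)=\limsup_{r\to\infty}\log\log\nu_f(r)/\log r$, so the estimate $\nu_f(r)\le r\exp\bigl(r^{\alpha+\epsilon}\bigr)$ off $E$, together with the removal of the exceptional set by monotonicity of $\nu_f$, already gives $\rho_2(f)\le\alpha+\epsilon$ and hence the lemma, without passing explicitly through $\mu_f$ and $M(r,f)$.
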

The following lemma involves the central index of a transcendental entire function.
\begin{lemma}\rm{\cite{lainebook}}\label{centlem}
Let $f$ be a transcendental entire function, $0<\delta<1/4$ and $z$ be such that $|z|=r$ and $|f(z)|>M(r,f)\nu(r,f)^{-\frac{1}{4}+\delta}$ holds. Then there exists a set $F\subset (0,\infty)$ with $m_l(F)<\infty$ such that
\begin{equation}\label{centeq}
f^{(m)}(z)=\left(\frac{\nu(r,f)}{z}\right)^m(1+o(1))f(z)
\end{equation}
for all $m\geq 0$ and all $r\notin F$.
\end{lemma}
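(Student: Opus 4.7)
The plan is to approach this via the classical Wiman--Valiron machinery, which is the natural tool for comparing an entire function with its higher derivatives at points where $|f(z)|$ is close to the maximum modulus. Write the Taylor expansion $f(z)=\sum_{n=0}^{\infty}a_n z^n$, let the maximum term at radius $r$ be $\tau(r,f)=\max_n |a_n|r^n$, and recall that $\nu(r,f)$ is the largest index $n$ for which this maximum is attained. The goal is to show that in the series for $f^{(m)}(z)$, the contribution from indices $n$ very close to $\nu(r,f)$ dominates, and within that ``window'' one can replace $n(n-1)\cdots(n-m+1)$ by $\nu(r,f)^m(1+o(1))$.

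First I would invoke the standard Wiman inequality: there exists an exceptional set $F_1\subset(0,\infty)$ with $m_l(F_1)<\infty$ such that, for $r\notin F_1$ and any preassigned $\varepsilon>0$,
\begin{equation*}
M(r,f)\leq \tau(r,f)\,\nu(r,f)^{\frac{1}{4}+\varepsilon}.
\end{equation*}
Combining this with the hypothesis $|f(z)|>M(r,f)\,\nu(r,f)^{-\frac14+\delta}$ gives a lower bound of the form $|f(z)|>\tau(r,f)\,\nu(r,f)^{-\frac12+\delta'}$ for some $\delta'>0$ (with $\varepsilon$ chosen sufficiently small). This is the quantitative input that makes the Wiman--Valiron approximation valid at $z$.

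Next I would split the Taylor series at $z$ into the ``central window'' $W=\{n:|n-\nu(r,f)|\leq \nu(r,f)^{\frac12+\varepsilon}\}$ and its complement. Standard Wiman--Valiron tail estimates (which exploit the log-concavity of $n\mapsto \log(|a_n|r^n)$ near its maximum at $n=\nu$) show that the coefficients outside $W$ decay like $|a_n|r^n\leq \tau(r,f)\exp(-c\,\nu(r,f)^{2\varepsilon})$, so their total contribution is negligible relative to the lower bound on $|f(z)|$ established in the previous step. For the derivative, differentiating term by term gives
\begin{equation*}
f^{(m)}(z)=z^{-m}\sum_{n=m}^{\infty}n(n-1)\cdots(n-m+1)\,a_n z^n,
\end{equation*}
and on the window $W$ the factor $n(n-1)\cdots(n-m+1)$ equals $\nu(r,f)^m(1+o(1))$ uniformly. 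Pulling out $(\nu(r,f)/z)^m$ and re-absorbing the (negligible) tail contribution then yields exactly the claimed identity $f^{(m)}(z)=(\nu(r,f)/z)^m(1+o(1))f(z)$.

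The main obstacle is calibrating the three small parameters $\varepsilon$ (in Wiman's inequality), the window width exponent $\frac12+\varepsilon$, and the gap $\delta$ in the hypothesis, so that (a) the tail contribution outside $W$ is small relative to the lower bound $\tau(r,f)\,\nu(r,f)^{-\frac12+\delta'}$, and (b) the coefficient approximation $n(n-1)\cdots(n-m+1)=\nu(r,f)^m(1+o(1))$ holds uniformly on $W$. The precise exponent $-\tfrac14+\delta$ in the hypothesis is exactly what is needed for (a) to succeed after $M(r,f)$ is traded for $\tau(r,f)$ via Wiman's bound. A secondary technical point is collecting all exceptional radii from the Wiman inequality and from the tail estimates into a single set $F$ of finite logarithmic measure, which is routine.
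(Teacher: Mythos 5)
The paper itself offers no proof of this lemma: it is quoted from Laine's monograph \cite{lainebook}, where it is the standard Wiman--Valiron theorem. Your outline follows exactly that classical route (maximum term $\tau(r,f)$, central index $\nu(r,f)$, a central window of indices around $\nu(r,f)$, tail estimates off the window, and the replacement of $n(n-1)\cdots(n-m+1)$ by $\nu(r,f)^m(1+o(1))$ inside it), so in substance it is the same argument as the cited source.

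Two cautions about executing the plan as written. First, the tail bound is not a consequence of log-concavity of $n\mapsto\log\left(|a_n|r^n\right)$: for a general entire function this sequence need not be log-concave or even unimodal, and a decay estimate of the form $|a_n|r^n\leq \tau(r,f)\exp\left(-c\,\nu(r,f)^{2\varepsilon}\right)$ outside the window simply cannot hold for all $r$; it holds only for $r$ outside an exceptional set, and proving it (via Valiron's comparison series, or the monotonicity of $t\mapsto\nu(t,f)$ combined with Borel-type growth lemmas) is the real content of Wiman--Valiron theory. That step must therefore be invoked as the Wiman--Valiron tail theorem rather than derived from log-concavity, and it is precisely where the exceptional set $F$ of finite logarithmic measure originates --- it is not merely a routine bookkeeping union at the end. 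Second, the detour through Wiman's inequality to trade $M(r,f)$ for $\tau(r,f)$ is unnecessary and in fact loses ground: Cauchy's estimate gives $\tau(r,f)\leq M(r,f)$ for every $r$, so the hypothesis already yields $|f(z)|>\tau(r,f)\,\nu(r,f)^{-\frac14+\delta}$, which is stronger than the bound $\tau(r,f)\,\nu(r,f)^{-\frac12+\delta'}$ you derive, and is what the comparison with the tail actually uses. With these repairs your sketch is the standard proof of the quoted result.
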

\section{Results}This section includes those results which we have framed with proofs for getting foothold in the last section. 
\begin{lemma}\label{meszero}
Suppose that $f$ is an entire function such that $T(r,f)\sim\log{M(r,f)}$ as $r\to \infty,$ outside a set of finite logarithmic measure. For $0<c<1$, let
$$E(r)=\{\theta\in[0,2\pi): \log|f(re^{\iota\theta})|\leq (1-c)\log{M(r,f)}\}$$
then, $m(E(r))\to 0$ as $r\to \infty$, outside a set of finite logarithmic measure.

\end{lemma}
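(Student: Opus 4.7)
The plan is to compare $T(r,f)$ with $\log M(r,f)$ by splitting the first-moment integral according to whether $\theta\in E(r)$ or not, and then invoke the hypothesis $T(r,f)\sim\log M(r,f)$ to force $m(E(r))\to 0$.

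First, since $f$ is entire, $N(r,f)=0$, so
$$T(r,f)=m(r,f)=\frac{1}{2\pi}\int_0^{2\pi}\log^+|f(re^{\iota\theta})|\,d\theta.$$
For $r$ large enough we have $\log M(r,f)>0$, so I would write the integral as a sum over $E(r)$ and $[0,2\pi)\setminus E(r)$. On $E(r)$, the defining inequality gives $\log^+|f(re^{\iota\theta})|\leq (1-c)\log M(r,f)$, and on the complement we use the trivial bound $\log^+|f(re^{\iota\theta})|\leq \log M(r,f)$.

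Combining the two estimates yields
$$T(r,f)\leq \frac{m(E(r))}{2\pi}(1-c)\log M(r,f)+\frac{2\pi-m(E(r))}{2\pi}\log M(r,f)=\log M(r,f)\left(1-\frac{c\,m(E(r))}{2\pi}\right),$$
which rearranges to
$$\frac{c\,m(E(r))}{2\pi}\leq 1-\frac{T(r,f)}{\log M(r,f)}.$$
By hypothesis the right-hand side tends to $0$ as $r\to\infty$ outside a set of finite logarithmic measure, hence so does $m(E(r))$.

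The argument is essentially a one-line computation once the splitting is set up; no step is genuinely hard. The only mild care needed is the sign/positivity of $\log M(r,f)$ for large $r$ (automatic unless $f$ is a bounded entire function, in which case the hypothesis $T\sim\log M$ makes the statement vacuous or trivial) and remembering to use $\log^+$ rather than $\log$ in the Nevanlinna integral so that the inequality on $E(r)$ is valid even where $|f|<1$.
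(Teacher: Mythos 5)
Your argument is correct. Note that the paper itself gives no proof of this lemma at all---it merely refers the reader to the arXiv preprint 2001.10729---so your self-contained derivation supplies what the text omits, and it is the standard argument one would expect that reference to contain. Since $f$ is entire, $T(r,f)=m(r,f)=\frac{1}{2\pi}\int_0^{2\pi}\log^+|f(re^{\iota\theta})|\,d\theta$, and splitting this integral over $E(r)$ and its complement, with the bound $(1-c)\log M(r,f)$ on $E(r)$ and $\log M(r,f)$ off it, yields $m(E(r))\leq \frac{2\pi}{c}\left(1-\frac{T(r,f)}{\log M(r,f)}\right)$ for all large $r$; the hypothesis $T(r,f)\sim\log M(r,f)$ outside a set of finite logarithmic measure then drives the right-hand side to $0$ outside that same exceptional set. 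Your two caveats are exactly the right ones: for nonconstant entire $f$ one has $\log M(r,f)>0$ for large $r$, which is what legitimizes replacing $\log|f|$ by $\log^+|f|$ in the estimate on $E(r)$ (the bounded, i.e.\ constant, case being degenerate), and the use of $\log^+$ keeps the inequality valid where $|f|<1$. No gap remains.
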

\begin{proof} 
This result is already in arxiv [2001.10729].  Proof can be seen there.
\end{proof}
We present a result which follows easily from Lemma [\ref{centlem}].
\begin{lemma}\label{maxlem}
Suppose $f$ is a transcendental entire funtion then, there exists a set $F\subset (0,\infty)$ with finite logarithmic measure such that  for all $z$ satisfying $|z|=r\notin F$ and $|f(z)|=M(r,f)$ we have
\begin{equation}\label{centreq}
\bigg| \frac{f(z)}{f^{(m)}(z)}\bigg|\leq 2r^m
\end{equation}
for all $m\in \mathbb{N}.$
\end{lemma}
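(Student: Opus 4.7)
The plan is to derive this lemma as a direct corollary of Lemma \ref{centlem}, using that the hypothesis $|f(z)|>M(r,f)\nu(r,f)^{-1/4+\delta}$ is automatically met once we are at a point of maximum modulus and $r$ is large.

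First I would fix any $\delta\in(0,1/4)$ and recall that for a transcendental entire function the central index satisfies $\nu(r,f)\to\infty$ as $r\to\infty$. Consequently $\nu(r,f)^{-1/4+\delta}\to 0$, so there exists $R_0>0$ such that for every $r\geq R_0$ one has $M(r,f)\nu(r,f)^{-1/4+\delta}<M(r,f)$. In particular, any $z$ with $|z|=r\geq R_0$ and $|f(z)|=M(r,f)$ satisfies the hypothesis of Lemma \ref{centlem}.

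Next I would apply Lemma \ref{centlem} to obtain an exceptional set $F_0\subset(0,\infty)$ with $m_l(F_0)<\infty$ such that, for every such $z$ with $r\notin F_0$,
$$f^{(m)}(z)=\left(\frac{\nu(r,f)}{z}\right)^{m}(1+o(1))f(z),\qquad m\in\mathbb{N}.$$
Rearranging this identity gives
$$\left|\frac{f(z)}{f^{(m)}(z)}\right|=\frac{r^{m}}{\nu(r,f)^{m}\,|1+o(1)|}.$$
Since $\nu(r,f)\geq 1$ for all sufficiently large $r$, the factor $(r/\nu(r,f))^{m}$ is bounded by $r^{m}$, and since $|1+o(1)|\geq 1/2$ for $r$ large (uniformly in $m$, as the $o(1)$ in Lemma \ref{centlem} is uniform over $m\in\mathbb{N}$), the reciprocal factor is bounded by $2$. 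Combining these yields the desired estimate $|f(z)/f^{(m)}(z)|\leq 2r^{m}$.

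Finally, setting $F=F_0\cup(0,R_0]$, which still has finite logarithmic measure, completes the argument. The only point requiring mild care is the uniformity of the $o(1)$-term in $m$; this is the standard Wiman--Valiron conclusion packaged into Lemma \ref{centlem}, so I would simply invoke it rather than reprove it. Otherwise the deduction is essentially a one-line consequence of that lemma together with the trivial observation $\nu(r,f)\to\infty$.
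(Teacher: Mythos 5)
Your proposal is correct and follows essentially the same route as the paper: apply Lemma \ref{centlem} at a maximum-modulus point, then use $\nu(r,f)\geq 1$ and the boundedness of the $(1+o(1))$ factor to get the bound $2r^{m}$. Your extra step of checking that a maximum-modulus point automatically satisfies the hypothesis $|f(z)|>M(r,f)\nu(r,f)^{-1/4+\delta}$ is a small refinement the paper leaves implicit, but the argument is the same.
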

\begin{proof}
Let $z$ be a point on the circle $|z|=r$ such that $|f(z)|=M(r,f)$ then, Lemma [\ref{centlem}] implies that, there exists $F\subset(0,\infty)$ with finite logarithmic measure such that 
\begin{equation*}
f^{(m)}(z)=\left(\frac{\nu(r,f)}{z}\right)^m(1+o(1))f(z), \quad  m\in \mathbb{N}
\end{equation*}
 for all $r\notin F$. We observe that $\nu(r,f)\geq 1$ therefore, equation \eqref{centeq} implies that
\begin{equation*}
\bigg| \frac{f(z)}{f^{(m)}(z)}\bigg|\leq \frac{|z|^m}{\nu(r,f)}(1+o(1))\leq 2r^m
\end{equation*}
for all $m\in \mathbb{N}$ and $r\notin F$.
\end{proof}
The next lemma shows that the conclusion of Lemma [\ref{maxlem}] holds true in a neighbourhood of the argument of $z$.
\begin{lemma}\label{reciprolem}
Let $f$ be a transcendental entire function and $z_r=re^{\iota \theta_r}$ be a point such that $|f(z_r)|=M(r,f)$. Then, for sufficiently large $r\notin F$ where $m_l(F)<\infty$, there exists a constant $\delta_r>0$ such that for all $z$ satisfying $|z|=r$ and $\arg{z}=\theta\in [\theta_r-\delta_r,\theta_r+\delta_r]$, one has
\begin{equation*}
\bigg| \frac{f(z)}{f^{(m)}(z)}\bigg|\leq 2r^m
\end{equation*}
for all $m\in \mathbb{N}.$
\end{lemma}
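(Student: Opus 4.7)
The plan is to upgrade the pointwise estimate of Lemma \ref{maxlem} to an arc around $\theta_r$ by invoking the continuity of $|f(re^{\iota\theta})|$ as a function of $\theta$ on the circle $|z|=r$, and then applying Lemma \ref{centlem} on that entire arc rather than just at the maximum point.

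More precisely, I would first fix some $\delta\in(0,1/4)$ (say $\delta=1/8$) and let $F\subset(0,\infty)$ be the exceptional set of finite logarithmic measure supplied by Lemma \ref{centlem} for this choice of $\delta$. Take any $r\notin F$ with $r$ sufficiently large, and let $z_r=re^{\iota\theta_r}$ be a point where $|f(z_r)|=M(r,f)$. Since $f$ is transcendental entire, the central index satisfies $\nu(r,f)\geq 1$ for large $r$, so $\nu(r,f)^{-1/4+\delta}\leq 1$, and hence
\[
|f(z_r)|=M(r,f) > \tfrac{1}{2}\, M(r,f)\,\nu(r,f)^{-1/4+\delta}.
\]
The function $\theta\mapsto |f(re^{\iota\theta})|$ is continuous on $[0,2\pi]$, so by a straightforward continuity argument there exists $\delta_r>0$ such that
\[
|f(re^{\iota\theta})| > M(r,f)\,\nu(r,f)^{-1/4+\delta}
\qquad \text{for all } \theta\in[\theta_r-\delta_r,\theta_r+\delta_r].
\]

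Next, I would apply Lemma \ref{centlem} to each such point $z=re^{\iota\theta}$ in the arc. Since $r\notin F$ and the hypothesis of that lemma is satisfied, we obtain
\[
f^{(m)}(z)=\left(\frac{\nu(r,f)}{z}\right)^m(1+o(1))f(z)
\]
for every $m\in\mathbb{N}$. Rearranging and using $\nu(r,f)\geq 1$,
\[
\left|\frac{f(z)}{f^{(m)}(z)}\right|=\frac{|z|^m}{\nu(r,f)^m}\,|1+o(1)|\leq 2r^m
\]
for all $r\notin F$ sufficiently large and all $\theta\in[\theta_r-\delta_r,\theta_r+\delta_r]$, which is the claimed inequality.

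The only mild subtlety is that the bound $\delta_r$ depends on $r$ (and on $f$), which is fine since the statement only asks for its existence. The use of Lemma \ref{centlem} is pointwise, but its exceptional set $F$ is a subset of $(0,\infty)$ that depends only on $f$ (not on $z$), so once $r\notin F$ is fixed, the conclusion applies simultaneously to every point $z=re^{\iota\theta}$ on the circle satisfying the magnitude hypothesis; this is exactly what lets the continuity argument push the single-point estimate to an entire arc. No additional computation beyond this is required.
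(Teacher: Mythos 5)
Your argument is correct and follows essentially the same route as the paper's proof: use continuity of $\theta\mapsto|f(re^{\iota\theta})|$ to show the hypothesis $|f(z)|>M(r,f)\nu(r,f)^{-1/4+\delta}$ of Lemma \ref{centlem} persists on a small arc around $\theta_r$ (valid for large $r$ since $\nu(r,f)\to\infty$, so $\nu(r,f)^{-1/4+\delta}<1$), then apply the Wiman--Valiron estimate at every point of that arc and conclude exactly as in Lemma \ref{maxlem}. Your remark that the exceptional set $F$ depends only on $f$ and $\delta$, not on the point $z$, is precisely the observation that makes the arc-wise application legitimate, and it matches the paper's reasoning.
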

\begin{proof}
The point $z_r=re^{\iota \theta_r}$ satisfies $|f(z_r)|=M(r,f)$. The function $|f(z)|$ is continuous on $|z|=r$ thus,  there exists $\delta_r$ such that for all $z$ satisfying $|z|=r$ and $\arg{z}=\theta\in[\theta_r-\delta_r,\theta_r+\delta_r]$, we have
$$||f(z)|-|f(z_r)|| <\frac{|f(z_r)|}{2}.$$
This again implies $$|f(z)|>\frac{|f(z_r)|}{2}=\frac{M(r,f)}{2}>M(r,f)\nu(r,f)^{-1/4+\delta}$$
for some $0<\delta<1/4$. Therefore, Lemma [\ref{centlem}] yields that   
 \begin{equation*}
f^{(m)}(z)=\left(\frac{\nu(r,f)}{z}\right)^m(1+o(1))f(z), \quad  m\in \mathbb{N}
\end{equation*}
for all $ |z|=r\notin F$ and $\arg{z}=\theta\in [\theta_r-\delta_r,\theta_r+\delta_r]$. Now, the conclusion  follows from Lemma [\ref{maxlem}].
\end{proof}
\section{Proofs of Theorems}
\begin{proof}[\underline{Proof of Theorem A}] Let $a\in\mathbb{C}$ be the Borel exceptional value of the function $A(z)$. Therefore, the function $A(z)-a$ is an entire function which has zero as its Borel exceptional value. By employing the Weierstrass factorisation theorem, we can write
\begin{equation}\label{eqA}
A(z)-a=v(z)e^{P(z)}
\end{equation}
where $P(z)$ is a non-constant polynomial of degree $n$ and $v(z)(\not\equiv 0)$ is an entire function such that $\rho(v)<n$. Lemma [\ref{implem}] implies that for $\epsilon>0$ there exists $E\subset [0,2\pi)$ with $m(E)=0$ such that for $\theta \in [0,2\pi)\setminus E$ with $\delta(P,\theta)<0$ there exists $R_0>1$ such that
\begin{equation}\label{A-aeq}
|A(z)-a|\leq \exp\{ (1-\epsilon)\delta(P,\theta) r^n \}
\end{equation}
for all $z=re^{\iota \theta}$ and $r>R_0$.
Suppose that $f$ is a finite order solution of the equation \eqref{sde}. Therefore, by Lemma \ref{gunlem}, there exists a set $E_1\subset [0,2\pi)$ with $m(E_1)=0$, such that if $\theta\in [0,2\pi)\setminus E_1$, then there is a constant $R=R(\theta)$ so that for $z$ satisfying $\arg{z}=\theta$ and $|z|>R$, we have
\begin{equation}\label{eqf}
\big|\frac{f^{(k)}(z)}{f(z)}\big|\leq |z|^{2\rho(f)}, k=1,2.
\end{equation}

\begin{enumerate}
\item Suppose that $\rho(A)<\rho(B)$ then, the conclusion holds using the Theorem [2] in \cite{finitegg}. Therefore, we need to consider $\rho(B)<\rho(A)$. The following three cases are there to be discussed:
\begin{enumerate}

\item when $0<\rho(B)<1/2$ then, by the Lemma \ref{gglemma}, for $0<\alpha<\rho(B)$  there exists a set $F\subset(0,\infty) $ with $\overline{\log dens} (F)>1-\rho(B)$ such that 
\begin{equation}\label{eqB}
|B(re^{\iota \theta})|>\exp\{r^{\alpha}\}
\end{equation}
for all $r\in F$. From the equation \eqref{sde},  \eqref{A-aeq},  \eqref{eqf} and  \eqref{eqB}  for all $z$ satisfying $\theta \in [0,2\pi)\setminus \big(E_1\cup E\big)$ with $\delta(P,\theta)<0$ and $|z|=r\in F$, $r>R_0$ we have
\begin{align*}
\exp\{r^{\alpha}\}&< |B(re^{\iota \theta})|\\
&\leq \big|\frac{f''(re^{\iota\theta})}{f(re^{\iota\theta})}\big|+|A(re^{\iota \theta})|\big|\frac{f'(re^{\iota\theta})}{f(re^{\iota\theta})}\big|\\
&\leq r^{2\rho(f)}\big(1+|A(re^{\iota \theta})-a|+|a|\big)\\
&\leq   r^{2\rho(f)}\big(1+o(1)+|a|\big)
\end{align*}
which implies a contradiction for arbitrary large $r$.
\item If $\rho(B)\in [\frac{1}{2},\infty)$ then, using Lemma [\ref{pharlem}], there exists a sector $\Omega(\alpha, \beta); 0\leq\alpha<\beta\leq2\pi$ with $\beta-\alpha \geq\frac{\pi}{\rho(B)}$ such that
\begin{equation}\label{eqqB}
\limsup_{r\to \infty}\frac{\log \log |B(re^{\iota \theta})|}{\log{r}}=\rho(B)
\end{equation}
for all $\theta \in [\alpha,\beta]$.  We have that $\rho(B)<\rho(A)$, therefore there exists $\theta_0 \in [\alpha,\beta]\setminus \big(E\cup E_1\big)$ such that the equation \eqref{A-aeq} holds true for $\theta_0$. We get a  contradiction using equations \eqref{sde}, \eqref{A-aeq}, \eqref{eqf} and \eqref{eqqB}.
\item When $\rho(B)=0$ then, a result of \cite{pd} implies that
\begin{equation}
\limsup_{r\to \infty}\frac{\log |B(re^{\iota \theta})|}{\log{r}}=\infty
\end{equation}
for all $\theta\in[0,2\pi)$. This will implies that 
\begin{equation}\label{eqqqB}
r^M<|B(re^{\iota \theta})|
\end{equation}
for all $r>R(M)$ and $\theta\in [0,2\pi)$, where $M$ is an arbitrary large constant. Therefore, for all sufficiently large $r$ and $\theta_0\in [0,2\pi)\setminus \big(E\cup E_1\big)$ with $\delta(P,\theta_0)<0$, using equations \eqref{sde}, \eqref{A-aeq}, \eqref{eqf} and \eqref{eqqqB}, we get a contradiction.
\end{enumerate}
\item Suppose that $B(z)$ is a function with Fabry gaps. Then, Remark [\ref{fabremark}] for $0<\alpha<\rho(B)$, there exists a set $F\subset (0,\infty)$ with $\overline{\log dens}(F)>0$ such that
\begin{equation}\label{eqqqqB}
|B(re^{\iota \theta}|>\exp\{r^{\alpha}\}
\end{equation}
for all $r\in F$. Therefore, for sufficiently large $r\in F$ and $\theta_0\in [0,2\pi)\setminus\big(E\cup E_1)$ with $\delta(P,\theta_0)<0$, by the equations \eqref{sde},  \eqref{A-aeq},  \eqref{eqf} and  \eqref{eqqqqB}, we get a contradiction.
\end{enumerate}
\end{proof}
\begin{proof}[\underline{Proof of Theorem B}]

\begin{enumerate}
\item We know that all solutions $f(\not \equiv 0)$ of equation \eqref{sde} are of infinite order, when $\rho(B)\neq \rho(A)$ by Theorem [A]. Using Lemma [\ref{gunlem}], for $\epsilon>0$, there exists a set $E\subset [1,\infty)$ that has finite logarithmic measure such that for all $z$ satisfying $|z|=r\notin E\cup [0,1]$ we have
\begin{equation}\label{eqfi}
\left| \frac{f^{(k)}(z)}{f^{(j)}(z)}\right| \leq c  \left[T(2r,f)\right]^{2(k-j)}
\end{equation}
where $c>0$ is a constant.

If $\rho(A)<\rho(B)$ then from \cite[Theorem 1]{kwon} and Lemma [\ref{wuthm}] we get that $\rho_2(f)= \max\{ \rho(A),\rho(B) \} $.

 If $\rho(B)<\rho(A)=n, n\in \mathbb{N},$  one can choose $\beta$ such that $\rho(B)<\beta<\rho(A)$. Suppose that $a\in \mathbb{C}$ is a Borel exceptional value of $A(z)$. Therefore, $A(z)-a=v(z)e^{P(z)}$, where $P(z)$ is a non-constant polynomial of degree $n$ and $v(z) $ is an entire function with $\rho(v)<n$. 

From Lemma [\ref{maxlem}], we can choose a sequence $r_m\notin E\cup[0,1]\cup F$ such that $r_m\to_{m\to \infty} \infty$ and let $|f(r_me^{\iota \theta_m})|=M(r_m,f)$. Using Lemma [\ref{maxlem}], we obtain
\begin{equation}\label{centraeq}
\bigg| \frac{f(r_me^{\iota \theta_m})}{f'(r_me^{\iota \theta_m})}\bigg|\leq 2r_m.
\end{equation}

Suppose that there exists a subsequence $(\theta_m)$ such that \\
$\lim_{m\to \infty}\theta_m=\theta_0.$ We have the following cases to discuss.
\begin{enumerate}[(i)]
\item If $\delta(P,\theta_0)>0$ then, the continuity of $\delta(P,\theta)$ implies that
$$\frac{1}{2}\delta(P,\theta_0)<\delta(P,\theta_m)<\frac{3}{2}\delta(P,\theta_0)$$
for all sufficiently large $m\in \mathbb{N}$. The part (i) of Lemma [\ref{implem}]  gives
\begin{align}\label{Aeq}
\exp{\left(\frac{1}{2}(1-\epsilon)\delta(P,\theta_0)r^n_m\right)}&\leq |A(r_me^{\iota \theta_m})-a|\\ \notag
&\leq \exp{\left(\frac{3}{2}(1+\epsilon)\delta(P,\theta_0)r^n_m\right)}
\end{align}
for all sufficiently large $m\in \mathbb{N}$. The equations \eqref{sde},  \eqref{centraeq}, \eqref{Aeq} and \eqref{eqfi}, for $z_m=r_me^{\iota \theta_m}$ implies that

\begin{align*}
\qquad \exp{\left(\frac{1}{2}(1-\epsilon)\delta(P,\theta_0)r^n_m\right)}&\leq |A(r_me^{\iota \theta_m})-a|\\\
&\leq \left|\frac{f''(r_me^{\iota \theta_m})}{f'(r_me^{\iota \theta_m)}}\right|\\
&+|B(r_me^{\iota \theta_m})|\left| \frac{f(r_me^{\iota \theta_m})}{f'(r_me^{\iota \theta_m})} \right|+|a| \\
&\leq c\left[T(2r_m,f)\right]^2+\exp{\left(r_m^{\beta}\right)}2r_m+|a|.
\end{align*} 
As $\beta < n,$ this implies that 
\begin{equation}\label{eqthm1}
\limsup_{m\rightarrow \infty}\frac{\log^+ \log^+ T(r_m,f)}{\log{r_m}} \geq \rho(A).
\end{equation}
Using Lemma [\ref{wuthm}] and equation \eqref{eqthm1} we have
$$ \rho_2(f)=\max\{ \rho(A),\rho(B)\}. $$
\item If $\delta(P,\theta_0)<0$ then,  the continuity of $\delta(P,\theta)$ implies that
$$ \frac{3}{2}\delta(P,\theta_0)<\delta(P,\theta_m)<\frac{1}{2}\delta(P,\theta_0)$$
for sufficiently large $m\in \mathbb{N}$.  Using part (ii) of Lemma [\ref{implem}] we obtain
\begin{align}\label{Aeq1}
\exp{\left(\frac{3}{2}(1+\epsilon)\delta(P,\theta_0)r^n_m\right)}&\leq |A(r_me^{\iota \theta_m})-a|\\ \notag
&\leq \exp{\left(\frac{1}{2}(1-\epsilon)\delta(P,\theta_0)r^n_m\right)}
\end{align}
for all sufficiently large $m\in \mathbb{N}$. The equations \eqref{sde}, \eqref{eqfi} \eqref{centraeq}  and \eqref{Aeq1}, for $z_m=r_me^{\iota \theta_m}$ implies that
\begin{align*}
\qquad\exp{\left(\frac{3}{2}(1+\epsilon)\delta(P,\theta_0)r^n_m\right)}&\leq |A(z_m)-a|\\
&\leq \left|\frac{f''(r_me^{\iota \theta_m})}{f'(r_me^{\iota \theta_m)}}\right|\\
&+|B(r_me^{\iota \theta_m})|\left| \frac{f(r_me^{\iota \theta_m})}{f'(r_me^{\iota \theta_m})} \right| +|a|\\
&\leq c\left[T(2r_m,f)\right]^2+\exp{\left(r_m^{\beta}\right)}2r_m+|a| .
\end{align*}
Therefore, as earlier we have $\rho_2(f)=\max\{\rho(A),\rho(B)\}$.
\item If $\delta(P,\theta_0)=0$. For all $m\in \mathbb{N}$, we use Lemma [\ref{reciprolem}] to have an interval $(\theta_m-l_0,\theta_m+l_0), 0<l_0<1/2$ such that, for all $\theta\in (\theta_m-l_0,\theta_m+l_0)$ we have  
\begin{equation}\label{receq}
\frac{|f(r_me^{\iota \theta})|}{|f'(r_me^{\iota\theta})|} \leq 2r_m.
\end{equation}
Again choose $\theta^*_m$ such that $\theta^*_m\in (\theta_m+l_0/3,\theta_m+l_0)$ and $\theta^*_m\to \theta^*_0$ as $m\to \infty$. This implies $\theta^*_0\in(\theta_0+l_0/3,\theta_0+l_0)$. 

Let us consider $\delta(P,\theta^*_0)>0$ then, again by the continuity of $\delta(P,\theta)$ we have
\begin{align}\label{del0}
\exp{\left(\frac{1}{2}(1-\epsilon)\delta(P,\theta_0^*)r^n_m\right)}&\leq |A(r_me^{\iota \theta^*_m})-a|\\
&\leq \exp{\left(\frac{3}{2}(1+\epsilon)\delta(P,\theta_0^*)r^n_m\right)}\notag
\end{align}
for sufficiently large $m$. Applying equations \eqref{sde}, \eqref{eqfi}, \eqref{receq} and \eqref{del0} , for $z^*_m=r_me^{\iota \theta^*_m}$, we get the desired result.

If $\delta(P,\theta^*_0)<0$ then, as in the case (ii) we can show that the conclusion holds true .
\end{enumerate}

\item It has been already proved that all non-trivial solutions $f(z)$ of  equation \eqref{sde}, with $A(z)$ and $B(z)$ satisfying the hypothesis of Theorem [A], are of infinite order. Also, if $\rho(A)\neq \rho(B)$ then, using case (1) above,  
$$\rho_2(f)=\max\{ \rho(A), \rho(B)\}.$$
Now, let $\rho(A)=\rho(B)=n, n\in\mathbb{N}$. Using  Remark [\ref{fabremark}], for $\epsilon>0$, there exist $F\subset (1,\infty)$ satisfying $\overline{\log dens}(F)> 0$ such that for all $|z|=r\in F$ we have 
\begin{equation}\label{eqfab}
|B(z)|>\exp{\{r^{n-\epsilon}\}}.
\end{equation}
Now, we choose $\theta \in [0,2\pi)$ with $\delta(P,\theta)<0$ and a sequence $(r_m) \subset F \setminus \left(E\cup[0,1]\right)$ satisfying $r_m\to\infty.$ Using equations \eqref{sde}, \eqref{eq2le},  \eqref{eqfi} and \eqref{eqfab} we obtain
\begin{align*}
\qquad \quad \exp{\left(r_m^{n-\epsilon}\right)}&< |B(r_me^{\iota \theta})|\leq \left| \frac{f''(r_me^{\iota \theta})}{f(r_me^{\iota \theta})}\right| +|A(r_me^{\iota \theta})| \left| \frac{f'(r_me^{\iota \theta})}{f(r_me^{\iota \theta})}\right| \\
& \leq c T(2r_m,f)^4 \\
&+ (\exp{( (1-\epsilon)\delta(P,\theta)r_m^{n}) }+|a|)cT(2r_m,f)^2 \\
&\leq cT(2r_m,f)^2(1+o(1)).
\end{align*}
Thus, we conclude that 
\begin{equation}\label{hypor}
\limsup_{m\rightarrow \infty} \frac{\log^+ \log^+ T(r_m,f)}{\log r_m} \geq n.
\end{equation}
Using Lemma [\ref{wuthm}] and equation \eqref{hypor} we get,
$$\rho_2(f)=\max\{ \rho(A), \rho(B) \}.$$ 
\end{enumerate}
\end{proof}

\begin{proof}[\underline{Proof of Theorem C}]
Let $a\in \mathbb{C}$ is a finite Borel exceptional value then, $A(z)-a=v(z)e^{P(z)}$ where $P(z)$ is a non-constant polynomial and $v(z)$ is an entire function with $\rho(v)<\deg{P}.$ Now, we can apply the arguments of Theorem [1] in \cite{sm2} to get the conclusion of the theorem.

\end{proof}
\begin{proof}[\underline{Proof of Theorem D}]
 Suppose $a\in \mathbb{C}$ is the finite deficient value of the function $A(z)$. Thus, $$\liminf_{r\to \infty}\frac{m\left(r,\frac{1}{A-a}\right)}{T(r,A)}=2d>0.$$
This yields that, for all sufficiently large $r$, we have
$$m\left(r,\frac{1}{A-a}\right)>dT(r,A).$$
Thus, for all sufficiently large $r$, there exists a $\theta_r$ such that for $z_r=re^{\iota \theta_r}$, one may have
\begin{equation}
\log{|A(z_r)-a|}\leq -dT(r,A).
\end{equation}
In Lemma [\ref{rholem}], let $\zeta>0$ and choose $0<l<1/2$  so that $K(\rho(A),\zeta)\big(l\log{\frac{1}{l}}\big)$ is sufficiently small and choose $\phi>0$ with $|\theta_r-\phi|\leq l$ such that 
\begin{align*}
\log{|A(re^{\iota \theta})-a|}&=\log{|A(re^{\iota \theta_r})-a|}+\int_{\theta_r}^{\theta}\frac{d}{dt}\log{|A(re^{\iota t})-a|}\\
& \leq-dT(r,A)+r\int_{\theta_r}^{\theta}\bigg|\frac{(A-a)'(re^{\iota t})}{(A-a)(re^{\iota t})}\bigg|\, dt\\
&\leq-dT(r,A)+K(\rho(A),\zeta)\bigg(l\log{\frac{1}{l}}\bigg)T(r,A)\\
&\leq 0
\end{align*}
for all sufficiently large $r\in F_{\zeta}$ where $\underline{\log dens}(F_{\zeta})\geq 1-\zeta$ and for all $\theta \in [\theta_r-\phi,\theta_r+\phi]$. Hence, for all  sufficiently large $r\in F_{\zeta}$ and $\theta \in [\theta_r-\phi,\theta_r+\phi]$ we have
\begin{equation}\label{Aaeq}
|A(re^{\iota \theta})|\leq |a|+1.
\end{equation}
\begin{enumerate}
\item Let $B(z)$ be  a transcendental entire function with Fabry gaps. Then, Remark [\ref{fabremark}] implies that  for $0<\alpha<\rho(B)$, there exists $F\subset(0,\infty) $ such that $\overline{\log dens}(F)>0$ and equation \eqref{eqqqqB} holds true for all $r\in F$. We choose $\zeta>0$ sufficiently small such that $\overline{\log dens}(F)>\zeta$. We know that 
$$\chi_{F\cap F_{\zeta}}=\chi_F+\chi_{F_{\zeta}}-\chi_{F\cup F_{\zeta}}$$
and $\overline{\log dens}(F\cup F_{\zeta})\leq1$, therefore
\begin{align*}
\overline{\log dens}(F\cap F_{\zeta})&\geq \overline{\log dens}(F)+\underline{\log dens} (F_{\zeta})-\overline{\log dens}(F\cup F_{\zeta})\\
&\geq \overline{\log dens}(F)+1-\zeta -1\\
&>0.
\end{align*}
Hence, for all $r\in F\cap F_{\zeta}$ and $\theta \in [\theta_r-\phi,\theta_r+\phi]\setminus E_1$,
applying equations \eqref{sde}, \eqref{eqf}, \eqref{eqqqqB} and \eqref{Aaeq} we get
\begin{align*}
\exp\{r^{\alpha}\}&<|B(re^{\iota\theta})|\\
&\leq \bigg|\frac{f''(re^{\iota\theta})}{f(re^{\iota\theta})}\bigg|+|A(re^{\iota \theta})|\bigg|\frac{f'(re^{\iota\theta})}{f(re^{\iota\theta})}\bigg|\\
&\leq r^{2\rho(f)}\big(1+|a|+1\big).
\end{align*}
This is a contradiction for sufficiently large $r\in F\cap F_{\zeta}.$
\item Suppose that $B(z)$ is a transcendental entire function satisfying 
$$T(r,B)\sim \log M(r,B))$$
outside a set $H$ of finite logarithmic measure. Using Lemma [\ref{meszero}], for a given $0<c<1$, we get
\begin{equation}\label{Beq}
\log{|B(re^{\iota \theta})|}\geq (1-c)\log{M(r,B)}
\end{equation}
for sufficiently large $r\notin H$ and $\theta \notin J(r)$.  Applying equations \eqref{sde}, \eqref{eqf}, \eqref{Aaeq} and \eqref{Beq}, for all  sufficiently large $r\in F_{\zeta}\setminus H$ and $\theta \in [\theta_r-\phi,\theta_r+\phi]\setminus(E_1\cup J(r))$, we get 
\begin{align*}
M(r,B)^{1-c}&\leq |B(re^{\iota \theta})|\\
&\leq \bigg|\frac{f''(re^{\iota\theta})}{f(re^{\iota\theta})}\bigg|+|A(re^{\iota \theta})|\bigg|\frac{f'(re^{\iota\theta})}{f(re^{\iota\theta})}\bigg|\\
&\leq r^{2\rho(f)}\big(1+|a|+1\big).
\end{align*}
This gives that $M(r,B)\leq r^{4\rho}(2+|a|)^2$ for sufficiently large $r\in F_{\zeta}\setminus H$. This will be a contradiction, as $B(z)$ is a transcendental entire function.
\end{enumerate}
\end{proof}
{\bf{Acknowledgement:}} We are thankful to our thesis advisor Sanjay Kumar for valuable suggestions.

\end{document}